\newtheorem{corollary}{Corollary}
\newtheorem{lemma}{Lemma}
\newtheorem{proposition}{Proposition}
\newtheorem{remark}{Remark}
\newtheorem{theorem}{Theorem}
\numberwithin{equation}{section}
\title[Critical metrics of the volume functional]{Geometric inequalities for critical metrics\\ of the volume functional}
\author{H. Baltazar}
\author{R. Batista}
\author{E. Ribeiro Jr.}
\address[H. Baltazar]{Universidade Federal do Piau\'{i} - UFPI, Departamento de Matem\'{a}tica, Campus Petr\^onio Portella, 64049-550, Teresina / PI, Brazil} \email{halyson@ufpi.edu.br}
\address[R. Batista]{Universidade Federal do Piau\'i - UFPI, Departamento de Matem\'{a}tica, Campus Petr\^onio Portella, 64049-550, Teresina / PI, Brazil} \email{rmarcolino@ufpi.edu.br}
\address[E. Ribeiro Jr]{Universidade Federal do Cear\'a - UFC, Departamento  de Matem\'atica, Campus do Pici, Av. Humberto Monte, Bloco 914,
60455-760, Fortaleza / CE, Brazil.}\email{ernani@mat.ufc.br}
\thanks{H. Baltazar was partially supported by FAPEPI/Brazil (Grant: 007/2018)}
\thanks{R. Batista was partially supported by CNPq/Brazil (Grant: 310881/2017-0)}
\thanks{E. Ribeiro was partially supported by CNPq/Brazil (Grant: 305410/2018-0 and 160002/2019-2), PRONEX-FUNCAP/CNPq/Brazil, CAPES/ Brazil  - Finance Code 001.}
\subjclass[2010]{Primary 53C25, 53C20, 53C21; Secondary 53C65}
\keywords{Volume functional; critical metrics; geometric inequalities; compact manifolds with boundary}
\date{\today}
\begin{document}

\begin{abstract}
In this article, we investigate the geometry of critical metrics of the volu\-me functional on an $n$-dimensional compact manifold with (possibly disconnected) boundary. We establish sharp estimates to the mean curvature and area of the boun\-da\-ry components of critical metrics of the volu\-me functional on a compact manifold. In addition, localized version estimates to the mean curvature and area of the boundary of critical metrics are also obtained. 
\end{abstract}

\maketitle

\section{Introduction}
\label{intro}
A promising way to find canonical metrics on a given manifold is to investigate critical metrics which arise as solutions of the Euler-Lagrange equations for curvature functionals.  It is known that the critical points of the total scalar curvature functional restricted to the set of smooth Riemannian structures on a compact manifold $M^n$ of unitary volume must be ne\-cessarily Einstein (see \cite[Theorem 4.21]{besse}). In a similar context, Miao and Tam \cite{miaotam,miaotamTAMS} and Corvino, Eichmair and Miao \cite{CEM} investigated the modified problem of finding stationary points for the volume functional on the space of metrics whose scalar curvature is equal to a given constant. 

In order to make our approach more understandable, we need to fix some terminology (see \cite{BDR,BDRR}).  Let $(M^{n},\,g)$ be a connected compact Riemannian manifold with boundary $\partial M.$ We say that $g$ is, for brevity, a {\it Miao-Tam critical metric} (or simply, {\it critical metric}), if there is a nonnegative smooth function $f$ on $M^n$ such that $f^{-1}(0)=\partial M$ satisfying the overdetermined-elliptic system
\begin{equation}\label{eqMiaoTam1}
\mathfrak{L}_{g}^{*}(f)=-(\Delta f)g+Hess_{g} f-fRic_{g}=g.
\end{equation}  Here, $\mathfrak{L}_{g}^{*}$ is the formal $L^{2}$-adjoint of the linearization of the scalar curvature operator $\mathfrak{L}_{g}$. Moreover, $Ric,$ $\Delta$ and $Hess$ stand for the Ricci tensor, the Laplacian operator and the Hessian form on $M^n,$ respectively.

Miao and Tam \cite{miaotam} showed that these critical metrics arise as critical points of the volume functional on $M^n$ when restricted to the class of metrics $g$ with pres\-cribed constant scalar curvature such that $g_{|_{T \partial M}}=h$ for a pres\-cribed Riemannian metric $h$  on the boundary; see also \cite{CEM}. Such metrics are effectively relevant in understanding the influence of the scalar curvature in controlling the volume of a given manifold. In this context, Corvino, Eichmair and Miao \cite{CEM} were able to establish a deformation result which suggests that the information of scalar curvature is not sufficient in giving volume comparison. This is in fact important because the volume comparison results are often used to explore geometrical and topological properties of a given manifold. Explicit examples of critical me\-trics can be found in  \cite{miaotam,miaotamTAMS}. They include the spatial Schwarzschild metrics and AdS-Schwarzschild metrics restricted to certain domains containing their horizon and bounded by two spherically symmetric spheres. Besides the standard metrics on geodesic balls in space forms are critical metrics. There are several uniqueness results in the literature. For more details on this subject see, e.g.,  \cite{BalRi1,BDR21,BDR,BS,BDRR,CEM,FangYuan,Kim,miaotam,miaotamTAMS,SW,yuan}.

 In \cite{CEM}, Corvino, Eichmair and Miao showed that the area of the boundary $\partial M$ of an $n$-dimensional scalar flat Miao-tam cri\-ti\-cal me\-tric must have an upper bound depending on the volume of $M^n$ (see \cite[Proposition 2.5]{CEM}). Later, inspired by a classical result obtained by Boucher,  Gibbons and Horowitz \cite{BGH} and Shen \cite{Shen}, Batista et al. \cite{BDRR} showed that the boundary $\partial M$ of a compact three-dimensional oriented Miao-Tam critical metric $(M^3,\,g)$ with connected boundary and nonnegative scalar curvature must be a $2$-sphere whose area satisfies the inequality $area(\partial M)\leq\frac{4\pi}{C},$ where $C$ is a constant greater than $1.$ This result also holds for negative scalar curvature, provided that the mean curvature of the boundary satisfies $H > 2,$ as was proved in \cite{BLF} and \cite{BS}.  In \cite{BalRi2}, Baltazar, Di\'ogenes and Ribeiro obtained an isoperimetric type inequality for Miao-Tam critical metrics with nonnegative scalar curvature.  Indeed, in the last years several progress have been made on boundary and volume estimates for critical metrics (see, e.g., \cite{BS,BLF,BDRR,CEM,yuan}), however,  only few results are known for possibly disconnected boundary case, which is the focus of this paper.  These estimates can be used to obtain new classification results and to discard some possible new examples.

In our first result, mainly inspired by the works of Borghini and Mazzieri \cite{BM1,BM2} on the uniqueness result for the de Sitter solution, we shall provide a sharp estimate to the mean curvature $H_{i}$ of the boundary components $\partial M_{i}$ of a critical metric of the volume functional on an $n$-dimensional compact manifold. More precisely, we have established the following result.

\begin{theorem}\label{thmMainA}
Let $(M^n,\,g,\,f)$ be an $n$-dimensional compact Miao-Tam critical me\-tric with (possibly disconnected) boundary $\partial M.$ Then we have:
\begin{eqnarray}
\min H_{i}\leq\sqrt{\frac{n(n-1)}{R(f_{\max})^{2}+2nf_{\max}}}\,\,\,\,\,\hbox{on}\,\,\,\,\partial M,
\end{eqnarray} where $f_{\max}$ is the maximum value of $f.$ Moreover, equality holds if and only if $M^{n}$ is isometric to a geodesic ball in a simply connected space form $\mathbb{R}^{n},$ $\mathbb{S}^{n}$ or  $\mathbb{H}^{n}.$
\end{theorem}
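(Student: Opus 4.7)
The approach is inspired by the Borghini--Mazzieri divergence/maximum-principle technique recalled in the introduction. First, I will extract the boundary data from \eqref{eqMiaoTam1}. Taking the trace gives $\Delta f = -(n+Rf)/(n-1)$, and (using that the scalar curvature $R$ is constant for a Miao--Tam critical metric and that $f$ vanishes on $\partial M$) restricting \eqref{eqMiaoTam1} to directions tangent to $\partial M$ shows that each boundary component $\partial M_i$ is totally umbilic, $|\nabla f|=\lambda_i$ is a positive constant along $\partial M_i$, and the mean curvature satisfies $H_i = 1/\lambda_i$. In particular, the inequality in the theorem is equivalent to the lower bound
\[
\max_i \lambda_i^{2} \ \geq\ \frac{R\,f_{\max}^{2} + 2n\,f_{\max}}{n(n-1)}.
\]

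The key object is the auxiliary function
\[
P \ :=\ |\nabla f|^{2} + \frac{R}{n(n-1)}\,f^{2} + \frac{2}{n-1}\,f,
\]
tailored so that $P = \lambda_i^{2}$ on $\partial M_i$ and $P(p_{0}) = (R\,f_{\max}^{2} + 2n\,f_{\max})/(n(n-1))$ at any interior critical point $p_{0}$ of $f$ (where $\nabla f = 0$ and $f = f_{\max}$). Rewriting \eqref{eqMiaoTam1} as $Hess_{g}\,f = (\Delta f+1)g + f\,Ric_{g}$ and taking traceless parts yields $Hess_{g}\,f - (\Delta f/n)g = f\,\mathring{Ric}_{g}$, where $\mathring{Ric}_{g} := Ric_{g} - (R/n)g$. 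Using this, the identity $\nabla(\Delta f) = -R\,\nabla f/(n-1)$, and the Bochner formula for $|\nabla f|^{2}$, a direct computation should produce the two identities
\[
\nabla P \ =\ 2f\,\mathring{Ric}_{g}(\nabla f,\cdot), \qquad
\Delta P \ =\ 2f^{2}\,|\mathring{Ric}_{g}|^{2} + 2\,\mathring{Ric}_{g}(\nabla f,\nabla f).
\]
Verifying the second identity is the main technical obstacle: after splitting $Hess_{g}\,f$ into its trace and traceless parts, the $|\nabla f|^{2}$-terms coming from $Ric_{g}(\nabla f,\nabla f)-(R/n)|\nabla f|^{2}$ and from $\nabla(\Delta f)$ must cancel, and the purely $f$-dependent contributions from $(\Delta f)^{2}$ and $f\Delta f$ must likewise collapse.

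Combining these two identities gives the elliptic differential inequality
\[
\Delta P - \frac{1}{f}\,\langle\nabla f,\nabla P\rangle \ =\ 2f^{2}\,|\mathring{Ric}_{g}|^{2} \ \geq\ 0
\]
on the interior $\{f>0\}$. The operator on the left is uniformly elliptic on compact subsets of the interior, so the strong maximum principle forces either $P\equiv$ const, or $\max_M P$ to be attained only on $\partial M$. In either case,
\[
\frac{R\,f_{\max}^{2} + 2n\,f_{\max}}{n(n-1)} \ =\ P(p_{0}) \ \leq\ \max_M P \ =\ \max_i \lambda_i^{2},
\]
which, via $H_i = 1/\lambda_i$, is exactly the stated upper bound on $\min_i H_i$.

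For the rigidity statement, the equality case forces $P(p_{0}) = \max_M P$, so $P$ attains its maximum at the interior point $p_{0}$; the strong maximum principle then gives $P\equiv$ const, and plugging this into the two identities above yields $\mathring{Ric}_{g}\equiv 0$ on $M$. Thus $(M^{n},g)$ is Einstein, and the classical rigidity theorem for Einstein Miao--Tam critical metrics (see \cite{miaotam,miaotamTAMS}) identifies $M^{n}$ with a geodesic ball in $\mathbb{R}^{n}$, $\mathbb{S}^{n}$ or $\mathbb{H}^{n}$.
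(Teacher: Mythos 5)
Your proposal is correct and takes essentially the same route as the paper: your $P$ is exactly the paper's auxiliary function $\Phi$ (up to the additive constant $\tfrac{n}{(n-1)R}$ when $R\neq 0$, and identical when $R=0$), your identities $\nabla P=2f\,\mathring{Ric}(\nabla f,\cdot)$ and $\Delta P-\tfrac{1}{f}\langle\nabla f,\nabla P\rangle=2f^{2}|\mathring{Ric}|^{2}\geq 0$ do check out and reproduce (\ref{EqPHI0}), and the maximum-principle comparison between the interior set $MAX(f)$ and $\partial M$ followed by Miao--Tam's Einstein rigidity is precisely the paper's argument, merely organized as ``inequality plus equality case'' instead of the paper's $\varepsilon$-exhaustion of $\{f\geq\varepsilon\}$. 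The only detail left implicit is the positivity of $Rf_{\max}^{2}+2nf_{\max}$ when $R<0$, needed for the stated square root and for your equivalence with the bound on $\max_i\lambda_i^{2}$; the paper obtains it from $f\leq -n/R$, i.e.\ Eq. (\ref{eq17k}).
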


\begin{remark}
A relevant  observation is that the mean curvature of the boundary of geodesic balls in space form satisfies $H^{2}=\frac{n(n-1)}{R(f_{\max})^{2}+2nf_{\max}}.$ Therefore, it follows from Theorem \ref{thmMainA} that the mean curvature of the boundary of a geodesic ball is the ma\-xi\-mum possible among all Miao-Tam critical metrics on compact manifolds with connected boundary.  Moreover, no restriction on the sign of the scalar curvature is assumed.
\end{remark}

A key ingredient to establish the proof of Theorem \ref{thmMainA} is a Robinson-Shen type identity (see Lemma \ref{Shen} in Section \ref{Sec2}) which is essentially inspired by \cite{Rob,Shen}.  Recently, a similar idea was used by Di\'ogenes, Gadelha and Ribeiro \cite{DGR} to obtain a sharp geometric inequality for compact quasi-Einstein manifolds with boundary.

As a consequence of Theorem \ref{thmMainA} we obtain the following corollary for scalar flat critical metrics.

\begin{corollary}
\label{corthm1}
Let $(M^n,\,g,\,f)$ be an $n$-dimensional compact scalar flat Miao-Tam critical me\-tric with connected boundary $\partial M.$ Then we have:

\begin{equation}
area(\partial M)\leq \sqrt{\frac{n^{2}}{2(n-1)f_{\max}}} Vol(M).
\end{equation} Moreover,  equality holds if and only if $M^{n}$ is isometric to a geodesic ball in $\mathbb{R}^{n}$ of radius $\sqrt{2(n-1)f_{\max}}.$
\end{corollary}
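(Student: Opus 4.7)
The plan is to derive the inequality as a direct consequence of Theorem~\ref{thmMainA} once two auxiliary identities are in place on the (connected) boundary. First, tracing the Miao-Tam equation \eqref{eqMiaoTam1} gives $-(n-1)\Delta f - fR = n$, so in the scalar-flat case $\Delta f\equiv -\frac{n}{n-1}$ and \eqref{eqMiaoTam1} reduces to
\begin{equation*}
Hess_{g} f \;=\; -\frac{1}{n-1}\,g + f\,Ric_{g}.
\end{equation*}
Since $f\ge 0$ on $M$ and $f=0$ on $\partial M$, one has $\nabla f = -|\nabla f|\,\nu$ along $\partial M$ (with $\nu$ the outward unit normal), and the divergence theorem yields
\begin{equation*}
\int_{\partial M}|\nabla f|\,dA \;=\; -\int_{M}\Delta f\,dV \;=\; \frac{n}{n-1}\,Vol(M).
\end{equation*}

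The key step is to show that $|\nabla f|$ is constant on $\partial M$. Setting $P := |\nabla f|^2 + \frac{2}{n-1}f$ and using the simplified Miao-Tam equation displayed above, a direct computation gives
\begin{equation*}
\nabla_{i} P \;=\; 2\,(Hess_{g} f)_{ij}\,f^{j} + \frac{2}{n-1}\,f_{i} \;=\; 2f\,(Ric_{g})_{ij}f^{j}.
\end{equation*}
Since $f$ vanishes on $\partial M$, $\nabla P\equiv 0$ there, and the connectedness assumption forces $P$—hence $|\nabla f|^{2}=P|_{\partial M}$—to take a common constant value $c^{2}$ along $\partial M$. Next, restricting \eqref{eqMiaoTam1} to tangential directions on $\partial M$ gives $Hess_{g}f = -\frac{1}{n-1}\,g|_{T\partial M}$, while $\nabla f = -|\nabla f|\,\nu$ yields the usual identification $Hess_{g}f(X,Y) = -|\nabla f|\,A(X,Y)$ for $X,Y\in T\partial M$. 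Tracing over $T\partial M$ shows that $\partial M$ is totally umbilical with (constant) mean curvature $H = 1/|\nabla f| = 1/c$.

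Combining these pieces, $c\cdot area(\partial M) = \frac{n}{n-1}\,Vol(M)$, so the constant value of $H$ on $\partial M$ equals $\frac{(n-1)\,area(\partial M)}{n\,Vol(M)}$. Plugging this into Theorem~\ref{thmMainA} with $R=0$—whose right-hand side is $\sqrt{(n-1)/(2f_{\max})}$—and rearranging produces the claimed inequality. For the equality case, equality must also hold in Theorem~\ref{thmMainA}, so $(M^{n},g)$ is isometric to a geodesic ball in the only scalar-flat space form, namely $\mathbb{R}^{n}$; from the explicit solution $f = (r^{2}-\rho^{2})/(2(n-1))$ on $B_{r}\subset \mathbb{R}^{n}$ one reads off $f_{\max} = r^{2}/(2(n-1))$, giving the radius $r = \sqrt{2(n-1)\,f_{\max}}$.

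The only step I expect to really require thought is the constancy of $|\nabla f|$ on $\partial M$, which is what makes the otherwise-crude inequality $H\cdot \int_{\partial M}H^{-1}\,dA\ge area(\partial M)$ (used implicitly in the rearrangement above) sharp in this setting. Fortunately this falls out cleanly from the identity $\nabla P = 2f\,Ric_{g}(\nabla f,\cdot)$ and the hypothesis that $\partial M$ is connected; everything else is integration by parts plus a direct invocation of Theorem~\ref{thmMainA}.
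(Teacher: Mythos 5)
Your proposal is correct and follows essentially the same route as the paper: integrate $\Delta f=-\tfrac{n}{n-1}$ over $M$, use that $H=1/|\nabla f|$ is constant on the connected boundary to get $area(\partial M)/H=\tfrac{n}{n-1}Vol(M)$, and then apply Theorem~\ref{thmMainA} with $R=0$, with equality handled via the rigidity case of that theorem. The only difference is that you re-derive the boundary facts (constancy of $|\nabla f|$ via the $P$-function, umbilicity, and the explicit radius of the ball) which the paper simply quotes from its background section and from \cite{BDRR,miaotam}; this is a matter of self-containedness, not of method.
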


It is well-known that if $\Omega$ is a bounded domain with smooth boundary in a simply connected space form, then the corresponding space form metric is a Miao-Tam critical metric on $\Omega$ if and only if $\Omega$ is a geodesic ball (see \cite{miaotamTAMS}). Therefore, it is natural to ask whether geodesic balls are the only critical metrics whose boundary is isometric to a standard sphere (see [\cite{miaotamTAMS}, pg. 2908]). As an application of Theorem \ref{thmMainA}, we obtain the following partial answer to this question. 

\begin{corollary}
\label{cor2thm1}
Let $(M^n,\,g,\,f)$ be an $n$-dimensional compact scalar flat Miao-Tam critical metric with boundary isometric to a standard sphere $\mathbb{S}^{n-1}(r)$. If $r>\sqrt{2(n-1)f_{max}}$, then $(M^n,\,g)$ can not be isometric to a geodesic ball in a simply connected space form $\mathbb{R}^n$.
\end{corollary}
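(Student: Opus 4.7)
The plan is to argue by contradiction, reducing the problem to an explicit computation of the Miao--Tam potential on a Euclidean geodesic ball. Assume, contrary to the conclusion, that $(M^{n},g)$ is isometric to some geodesic ball $B\subset \mathbb{R}^{n}$. Since the boundary $\partial M$ is isometric to $\mathbb{S}^{n-1}(r)$, the radius of $B$ must equal $r$. The goal is then to pin down $f_{\max}$ on this ball in terms of $r$, and check that the resulting identity is inconsistent with the strict inequality $r>\sqrt{2(n-1)f_{\max}}$.

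Next, I would solve the critical metric system \eqref{eqMiaoTam1} explicitly on $B$. Since $\mathrm{Ric}_{g}\equiv 0$ on $\mathbb{R}^{n}$, the equation reduces to $-(\Delta f)g+\mathrm{Hess}\,f=g$, and the natural radial ansatz $f=f(s)$ (with $s$ the Euclidean distance to the center) reduces this tensor equation to the two scalar relations
\begin{equation*}
-(n-1)\,\frac{f'(s)}{s}=1,\qquad -f''(s)-(n-2)\,\frac{f'(s)}{s}=1,
\end{equation*}
coming from the radial and tangential components, respectively. These are compatible, and the unique solution vanishing on $\partial B=\{s=r\}$ is
\begin{equation*}
f(s)=\frac{r^{2}-s^{2}}{2(n-1)}.
\end{equation*}
In particular, $f_{\max}=f(0)=\dfrac{r^{2}}{2(n-1)}$, which gives $r=\sqrt{2(n-1)f_{\max}}$.

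This directly contradicts the hypothesis $r>\sqrt{2(n-1)f_{\max}}$, proving the corollary. Note that the argument can alternatively be phrased as an immediate application of the rigidity part of Corollary \ref{corthm1}: the explicit computation above shows that a Euclidean geodesic ball always saturates the isoperimetric-type inequality there, and the equality case forces $r=\sqrt{2(n-1)f_{\max}}$. There is essentially no obstacle in this argument; the only point requiring care is the explicit integration of the overdetermined system on $B$, which is routine once the radial reduction is made.
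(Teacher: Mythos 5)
Your proposal is correct, but it follows a genuinely different route from the paper. The paper's proof is a two-line application of earlier results: since $\partial M$ is isometric to $\mathbb{S}^{n-1}(r)$, Theorem 11 of \cite{BLF} gives $H^{2}\leq\big(\tfrac{n-1}{r}\big)^{2}$, and the hypothesis $r>\sqrt{2(n-1)f_{\max}}$ then forces $H^{2}<\tfrac{n-1}{2f_{\max}}$, which is incompatible with the value $H^{2}=\tfrac{n(n-1)}{R f_{\max}^{2}+2nf_{\max}}$ attained by geodesic balls, i.e.\ with the equality case of Theorem \ref{thmMainA}. You instead argue by contradiction directly on the model: transplanting the potential to a Euclidean ball of radius $r$ and integrating the overdetermined system, you find $f_{\max}=\tfrac{r^{2}}{2(n-1)}$, i.e.\ $r=\sqrt{2(n-1)f_{\max}}$, contradicting the strict inequality. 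Your computation is consistent with the paper (it reproduces $H=\tfrac{n-1}{r}$ and the saturation of Corollary \ref{corthm1}), and it has the advantage of being completely self-contained, needing neither the external estimate from \cite{BLF} nor Theorem \ref{thmMainA}; the paper's route, by contrast, is shorter given the machinery already in place and displays the corollary as a direct consequence of the sharp mean-curvature bound. One small point you should tighten: the potential carried over by the isometry is part of the given data and is not a priori radial, so "the natural radial ansatz" needs a word of justification. This is immediate, though: tracing \eqref{eqMiaoTam1} with $\mathrm{Ric}=0$ gives $\Delta f=-\tfrac{n}{n-1}$, hence $\mathrm{Hess}\,f=-\tfrac{1}{n-1}\,g_{\mathrm{eucl}}$, so $f(x)=b+\langle a,x\rangle-\tfrac{|x|^{2}}{2(n-1)}$ is a quadratic polynomial, and the requirement that $f$ vanish precisely on the round sphere $\{|x|=r\}$ kills the linear term and fixes $b=\tfrac{r^{2}}{2(n-1)}$; thus the radial solution you wrote down is the only admissible one, and your contradiction goes through.
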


Proceeding, we shall establish a localized version of Theorem \ref{thmMainA} which can be also seen as an obstruction result.  In order to do so, we consider $MAX(f)$ to be the set where the maximum of $f$ is achieved, namely, $$MAX(f)=\{p\in M;\,f(p)=f_{max}\}$$ and let $E$ be a single connected component of $M\setminus MAX(f).$ 

Now we may state our next result.

\begin{theorem}
\label{thmlocalA}
Let $(M^n,\,g,\,f)$ be an $n$-dimensional compact Miao-Tam critical me\-tric with (possibly disconnected) boundary $\partial M,$ let $E$ be a single connected component of $M\setminus MAX(f),$ and let $\Gamma_{E}=\partial M\cap E$ be non-empty and possibly disconnected. Then we have:
\begin{eqnarray}
\min H_{i}\leq\sqrt{\frac{n(n-1)}{R(f_{\max})^{2}+2nf_{\max}}}\ \ \ \ on\ \ \Gamma_{E}.
\end{eqnarray} Moreover, equality holds if and only if $M^{n}$ is isometric to a geodesic ball in a simply connected space form $\mathbb{R}^{n},$ $\mathbb{S}^{n}$ or  $\mathbb{H}^{n}.$
\end{theorem}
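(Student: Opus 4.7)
The plan is to run the argument used for Theorem~\ref{thmMainA} verbatim, but with the component $E$ replacing $M$ throughout. The key input is the Robinson--Shen type identity of Lemma~\ref{Shen}, which produces an auxiliary function
\[
\Phi \;=\; |\nabla f|^{2} \;+\; \frac{R}{n(n-1)}\, f^{2} \;+\; \frac{2}{n-1}\, f
\]
satisfying a differential inequality amenable to the maximum principle. Using the Miao--Tam equation~(\ref{eqMiaoTam1}) one checks that $|\nabla f|=1/H$ on $\partial M$ (where $f=0$), whence $\Phi|_{\partial M}=1/H^{2}$, while at any point of $MAX(f)$, where $\nabla f=0$ and $f=f_{\max}$, one has $\Phi=\Phi_{0}:=\frac{Rf_{\max}^{2}+2nf_{\max}}{n(n-1)}$. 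This is precisely the setup used to establish Theorem~\ref{thmMainA}.

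Before applying the identity on $\overline{E}$, I would first observe that both pieces of the topological boundary of $E$ in $M$ are non-empty: $\partial M\cap E$ is nonempty by hypothesis, while if $MAX(f)\cap\overline{E}$ were empty, $\overline{E}$ would be a nonempty clopen subset of the connected manifold $M$, which is impossible since $MAX(f)\neq\emptyset$ and $E\subsetneq M$. Hence the topological boundary of $E$ decomposes as $(\partial M\cap E)\cup(MAX(f)\cap\overline{E})$, with $\Phi$ equal to $1/H^{2}$ on the first piece and equal to the constant $\Phi_{0}$ on the second. Applying the maximum principle provided by Lemma~\ref{Shen} to $\Phi$ on $\overline{E}$, I would conclude that either $\max_{\partial E}\Phi\geq\Phi_{0}$ --- which is exactly the desired estimate $\min_{\partial E}H\leq\sqrt{n(n-1)/(Rf_{\max}^{2}+2nf_{\max})}$ --- or the maximum of $\Phi$ on $\overline{E}$ is attained only on $MAX(f)\cap\overline{E}$. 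In the latter case, one exploits that $MAX(f)\subset\mathrm{int}(M)$ to apply the strong maximum principle on an ambient ball of $M$ around the maximum point, propagate $\Phi\equiv\Phi_{0}$ to all of $M$ by connectedness and unique continuation, and invoke the rigidity clause of Lemma~\ref{Shen} to identify $(M,g)$ with a geodesic ball in $\mathbb{R}^{n}$, $\mathbb{S}^{n}$, or $\mathbb{H}^{n}$.

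The equality case is then extracted exactly as in Theorem~\ref{thmMainA}: equality at a point of $\partial E$ forces $\Phi$ to attain $\Phi_{0}$ on both boundary pieces, and the rigidity part of Lemma~\ref{Shen} produces the space form conclusion. The main technical obstacle I anticipate is the step that handles $MAX(f)\cap\overline{E}$: since $MAX(f)$ is a priori only a closed subset of $M$ of unspecified regularity, the strong maximum principle cannot be applied to $E$ directly at these points and one must instead work in ambient balls of $M$, carefully using that $\Phi$ is defined globally on $M$ and that $\nabla f=0$ throughout $MAX(f)$. This is the single step that requires something beyond a blind transcription of the proof of Theorem~\ref{thmMainA}, and it is where the argument is genuinely sensitive to the local structure of $MAX(f)$.
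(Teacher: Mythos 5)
Your reduction to the maximum principle for $\Phi$ on $\overline{E}$ works up to the point where $\max_{\partial E}\Phi<\Phi_{0}$ and the maximum value $\Phi_{0}$ of $\Phi$ over $\overline{E}$ is attained only on $MAX(f)\cap\overline{E}$. The propagation step you propose there is the gap, and it is not a mere technicality. A point $p\in MAX(f)\cap\overline{E}$ is an interior point of $M$ but a \emph{boundary} point of the domain $E$ on which your maximum principle was run; to invoke the strong maximum principle in an ambient ball $B\subset M$ around $p$ you would need $\Phi\le\Phi(p)=\Phi_{0}$ on all of $B$, whereas your argument only controls $\Phi$ on $B\cap\overline{E}$ (and on $B\cap MAX(f)$, where it equals $\Phi_{0}$). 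On $B\setminus\overline{E}$, i.e.\ on neighbouring components of $M\setminus MAX(f)$, the hypothesis gives no information at all --- the mean curvature of the \emph{other} boundary components of $\partial M$ is unconstrained, so $\Phi$ may a priori exceed $\Phi_{0}$ there. Hence $p$ need not be a local maximum of $\Phi$ in $M$, the strong maximum principle in the ambient ball says nothing, and a Hopf-type boundary argument from inside $E$ is unavailable precisely because $MAX(f)$ has no interior-ball regularity (as you yourself note). The dangerous scenario --- $MAX(f)\cap\overline{E}$ a separating, hypersurface-like set with $E$ only one ``lobe'' of $M$ (compare the Schwarzschild-type examples with two boundary spheres) --- is exactly the case your argument cannot reach, and it is the whole content of the localized statement beyond Theorem~\ref{thmMainA}. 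A smaller point: Lemma~\ref{Shen} has no rigidity clause; rigidity comes from $\mathring{Hess}\,f\equiv0$, identity (\ref{IdRicHess}), and Miao--Tam's classification of Einstein critical metrics.

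The paper closes precisely this gap by a different mechanism, which your proposal does not contain. Under the assumption $H^{2}\ge n(n-1)/(Rf_{\max}^{2}+2nf_{\max})$ on $\partial E$, Lemma~\ref{lemK1} (the maximum principle step you also use, but stated only as the non-strict gradient bound $|\nabla f|^{2}\le h$ on $E$) feeds into the monotonicity of the weighted area functional $\mathcal{F}(t)$ of (\ref{defU}) (Lemma~\ref{lemAthmAlocal}); then the Łojasiewicz inequality together with Borghini--Mazzieri's area argument shows that $\mathcal{H}^{n-1}(MAX(f)\cap\overline{E})>0$ would force $\mathcal{F}(t)\to+\infty$ as $t\to f_{\max}^{-}$ (Lemma~\ref{lemBthmAlocal}), contradicting $\mathcal{F}(t)\le\mathcal{F}(0)<\infty$. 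Therefore $\mathcal{H}^{n-1}(MAX(f)\cap\overline{E})=0$, so $MAX(f)\cap\overline{E}$ cannot disconnect $E$ from the rest of $M$, forcing $E$ to be the unique component with $\partial E=\partial M$, and only then is Theorem~\ref{thmMainA} applied to all of $M$. Without an ingredient of this kind (a measure-theoretic or monotonicity argument ruling out a separating maximum set), the verbatim transcription of the proof of Theorem~\ref{thmMainA} to $E$ does not yield the theorem.
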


\begin{remark}
Notice that Theorem \ref{thmlocalA} guarantees that if on a single connected component $E$ of $M\setminus MAX(f)$ it holds $$H^{2}\ge \frac{n(n-1)}{R(f_{\max})^{2}+2nf_{\max}}\ \ \ \ on\ \ \Gamma_{E},$$ then the entire manifold must be isometric to a geodesic ball in $\Bbb{R}^n,$ $\Bbb{H}^{n}$ or $\Bbb{S}^n.$
\end{remark}

Before presenting our next result, it is important to recall an useful estimate to the area of the boundary $\partial M$ of an $n$-dimensional Miao-tam cri\-ti\-cal me\-tric which was obtained by contributions by \cite{BLF,BDRR,CEM}. 

\begin{theorem}[\cite{BLF,BDRR} and \cite{CEM}]
\label{thmboundary}
Let $(M^n,\,g,\,f)$ be an $n$-dimensional compact Miao-Tam critical metric with connected boundary $\partial M.$ For the case of negative scalar curvature we assume in addition that $H^{2}>-\frac{n-1}{n}R.$ Then the area of the boundary $\partial M$ satisfies

\begin{equation}
\label{eqthmbound}
area(\partial M)\leq \frac{1}{\Big(\frac{n-2}{n}R+\frac{n-2}{n-1}H^{2}\Big)} \int_{\partial M}R^{\partial M}dS,
\end{equation} where $R^{\partial M}$ stands for the scalar curvature of $(\partial M,g_{|_{\partial M}}).$ Moreover, equality holds if and only if $M^{n}$ is isometric to a geodesic ball in $\Bbb{R}^n,$ $\Bbb{H}^{n}$ or $\Bbb{S}^n.$
\end{theorem}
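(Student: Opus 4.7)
The plan is to reduce the area inequality to a single integral estimate on $\partial M$, obtained by combining the Miao-Tam equation \eqref{eqMiaoTam1} with the Gauss equation, using constancy of the scalar curvature and one integration by parts. First I would record some consequences of \eqref{eqMiaoTam1}. Tracing yields $(n-1)\Delta f+fR=-n$, and taking the divergence of \eqref{eqMiaoTam1} and invoking the second Bianchi identity gives $f\,\nabla R=0$, so $R$ is constant on $M$. Subtracting the traced equation from \eqref{eqMiaoTam1} produces the traceless identity
\begin{equation*}
\mathrm{Hess}\,f-\tfrac{\Delta f}{n}g \;=\; f\Big(\mathrm{Ric}-\tfrac{R}{n}g\Big).
\end{equation*}
Evaluating \eqref{eqMiaoTam1} on $\partial M$ (where $f=0$) shows $\mathrm{Hess}\,f=-\tfrac{1}{n-1}g$ along $\partial M$; since the outward unit normal is $\nu=-\nabla f/|\nabla f|$, this forces $\partial M$ to be totally umbilical with $H=1/|\nabla f|$, and $|\nabla f|$ (hence $H$) is constant on the connected boundary.

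Next I would compute $\int_M f|\mathring{\mathrm{Ric}}|^2\,dV$ in two ways, writing $\mathring T:=T-(\mathrm{tr}\,T/n)g$. Using the traceless identity above together with $\mathrm{div}\,\mathring{\mathrm{Ric}}=\tfrac{n-2}{2n}\nabla R=0$, one integration by parts gives
\begin{equation*}
\int_M f\,|\mathring{\mathrm{Ric}}|^2\,dV \;=\; \int_M\langle\mathrm{Hess}\,f,\mathring{\mathrm{Ric}}\rangle\,dV \;=\; \int_{\partial M}\mathring{\mathrm{Ric}}(\nabla f,\nu)\,dS \;=\; -|\nabla f|\int_{\partial M}\mathring{\mathrm{Ric}}(\nu,\nu)\,dS.
\end{equation*}
Since the left-hand side is nonnegative and $|\nabla f|>0$ is a positive constant along $\partial M$, this yields $\int_{\partial M}\mathrm{Ric}(\nu,\nu)\,dS\le\tfrac{R}{n}\,area(\partial M)$.

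Because $\partial M$ is umbilical with $|A|^2=H^2/(n-1)$, the Gauss equation reads $R^{\partial M}=R-2\,\mathrm{Ric}(\nu,\nu)+\tfrac{n-2}{n-1}H^2$; integrating over $\partial M$ and inserting the bound from the previous step produces
\begin{equation*}
\int_{\partial M}R^{\partial M}\,dS \;\ge\; \Big(\tfrac{n-2}{n}R+\tfrac{n-2}{n-1}H^2\Big)\,area(\partial M),
\end{equation*}
which is \eqref{eqthmbound}. The hypothesis $H^2>-\tfrac{n-1}{n}R$ in the negative-scalar case is precisely what makes the prefactor on the right strictly positive, so the inequality can be divided through without reversing direction. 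Equality forces $f\,|\mathring{\mathrm{Ric}}|^2\equiv 0$, hence $\mathrm{Ric}=\tfrac{R}{n}g$ on the interior; combined with the Miao-Tam structure this identifies $(M,g)$ with a geodesic ball in $\mathbb{R}^n$, $\mathbb{H}^n$ or $\mathbb{S}^n$.

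The main obstacle I anticipate is making the integration by parts close cleanly on $\partial M$: the bulk divergence term vanishes only after simultaneous use of the traceless Miao-Tam identity \emph{and} the constancy of $R$, and the final step pulls the factor $|\nabla f|$ out of the boundary integral by invoking connectedness of $\partial M$. Were $\partial M$ disconnected, $|\nabla f|$ could take different values on different components and one would not recover a single scalar multiplier; this is the structural reason the theorem is restricted to the connected boundary case.
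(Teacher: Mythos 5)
Your argument is correct, and it reaches the paper's inequality through the same skeleton (boundary umbilicity with $H=1/|\nabla f|$, the estimate $\int_{\partial M}Ric(\nu,\nu)\,dS\le \frac{R}{n}\,area(\partial M)$, the Gauss equation, and the equality case via $\int_M f|\mathring{Ric}|^2=0$ plus Miao--Tam's classification of Einstein critical metrics), but you derive the central divergence identity differently. You integrate $\langle Hess\,f,\mathring{Ric}\rangle$ by parts using $\operatorname{div}\mathring{Ric}=\frac{n-2}{2n}\nabla R=0$ and the traceless identity $f\mathring{Ric}=\mathring{Hess\,f}$; this is in substance Lemma~\ref{lemB1} (the route of \cite{BDRR} and \cite{CEM}), i.e.\ the classical proof to which the paper's own proof is offered as an alternative. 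The paper instead obtains the pointwise identity $\operatorname{div}\big(Ric(\nabla f)-\frac{R}{n}\nabla f\big)=f|\mathring{Ric}|^{2}$ from the Robinson--Shen type identity of Lemma~\ref{Shen} combined with the fundamental equation, and then applies the divergence theorem; the two identities coincide since $R$ is constant, so the proofs are mathematically equivalent. What your version buys is self-containedness and economy for this one theorem (no auxiliary vector field, just Bianchi plus one integration by parts); what the paper's version buys is uniformity of presentation, since Lemma~\ref{Shen} is the engine reused for the function $\Phi$, Lemma~\ref{lemK1}, and Theorems~\ref{thmMainA}, \ref{thmlocalA} and \ref{thmB}. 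Two small points you handled correctly and should keep explicit: $|\nabla f|$ is nonzero and locally constant on $\partial M$ (so $\nu$ and the factor you pull out of the boundary integral are well defined), and connectedness of $\partial M$ is exactly what lets a single constant $H$ appear in the final prefactor, as you note.
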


For the sake of completeness and comparison, we will include in Section \ref{Sec2} an alternative proof of Theorem \ref{thmboundary} by using the Robinson-Shen type identity obtained in Lemma \ref{Shen}. A crucial advantage is that the rearrangement of such a proof in terms of the Robinson-Shen identity yields a much more simple presentation.

Our next result is a localized version of Theorem \ref{thmboundary}.

\begin{theorem}
\label{thmB}
Let $(M^n,\,g,\,f)$ be an $n$-dimensional compact Miao-Tam critical me\-tric with (possibly disconnected) boundary $\partial M,$ let $E$ be a single connected component of $M\setminus MAX(f),$ and let $\Gamma_{E}=\partial M\cap E$ be the non-empty and possibly disconnected. Then we have:

\begin{equation}
\label{eqthmB}
\int_{\Gamma_{E}}|\nabla f|\Big(R^{\Gamma_{E}}-\frac{n-2}{n}R-\frac{n-2}{n-1}H^{2}\Big)dS \ge 0,
\end{equation} where $R^{\Gamma_{E}}$ is the scalar curvature of $(\Gamma_{E},\,g_{|_{\Gamma_{E}}}).$ Moreover, equality holds in (\ref{eqthmB}) if and only if $M^{n}$ is isometric to a geodesic ball in $\Bbb{R}^n,$ $\Bbb{H}^{n}$ or $\Bbb{S}^n.$
\end{theorem}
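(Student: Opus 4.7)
The strategy is to integrate the Robinson--Shen type identity of Lemma \ref{Shen} over a subregion of $E$ exhausting $E$ away from $MAX(f),$ and then pass to the limit. Recall that for any Miao--Tam critical metric the scalar curvature $R$ is constant, and taking the trace of \eqref{eqMiaoTam1} yields $(n-1)\Delta f = -(Rf+n);$ along $\partial M,$ where $f=0,$ this together with \eqref{eqMiaoTam1} forces $\mathrm{Hess}\,f = -\tfrac{1}{n-1}g,$ so $\partial M$ is totally umbilic and $H = 1/|\nabla f|.$ The Robinson--Shen formula, following the pattern of \cite{Rob,Shen}, will produce a vector field $X$ built from $\nabla f,$ $f$ and $|\nabla f|$ whose divergence $\divergence(X)$ has a definite sign, with equality characterizing the space form balls. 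The key observation for the boundary integrand is that, via the Gauss equation $R^{\partial E} = R - 2\mathrm{Ric}(\nu,\nu) + \tfrac{n-2}{n-1}H^{2}$ applied to the umbilic hypersurface $\partial E,$ one has
\begin{equation*}
R^{\partial E} - \tfrac{n-2}{n}R - \tfrac{n-2}{n-1}H^{2} \;=\; \tfrac{2}{n}R - 2\mathrm{Ric}(\nu,\nu),
\end{equation*}
which is precisely the quantity the Robinson--Shen identity naturally delivers when $X$ is contracted with the outward conormal along $\partial M.$

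Concretely, I would choose a regular value $\varepsilon\in(0,f_{\max})$ of $f$ (generic by Sard's theorem) and set $E_{\varepsilon} = E\cap\{f\le f_{\max}-\varepsilon\}.$ Then $\partial E_{\varepsilon}$ is the disjoint union of $\partial E=\partial M\cap E$ (where $f=0$) and the smooth level set $\Sigma_{\varepsilon} = E\cap\{f=f_{\max}-\varepsilon\}.$ Applying the divergence theorem gives
\begin{equation*}
\int_{E_{\varepsilon}} \divergence(X)\,dV \;=\; \int_{\partial E} \langle X,\nu\rangle\, dS \;+\; \int_{\Sigma_{\varepsilon}} \langle X,\nu_{\varepsilon}\rangle\, dS,
\end{equation*}
where $\nu$ is the outward unit normal to $\partial M$ and $\nu_{\varepsilon}=-\nabla f/|\nabla f|$ on $\Sigma_{\varepsilon}.$ The first boundary term, after the Gauss--equation manipulation above and the substitution $|\nabla f|=1/H,$ becomes (up to a fixed positive constant) the integral appearing in \eqref{eqthmB}.

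The main technical obstacle is controlling the level-set term $\int_{\Sigma_{\varepsilon}}\langle X,\nu_{\varepsilon}\rangle\,dS$ as $\varepsilon\to 0^{+}.$ Because $|\nabla f|$ vanishes on $MAX(f)$ while the $\Sigma_{\varepsilon}$ collapse to a possibly irregular subset of $MAX(f),$ a careful asymptotic analysis of the explicit form of $X$ provided by Lemma \ref{Shen} is required; the components of $X$ along $\nabla f$ should degenerate at least as fast as $|\nabla f|$ (or an appropriate power thereof), which together with a uniform bound on the $(n-1)$-volumes of the $\Sigma_{\varepsilon}$ forces this contribution to vanish in the limit. Once this is secured, the sign of $\divergence(X)$ given by Lemma \ref{Shen} yields \eqref{eqthmB}.

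For the rigidity statement, equality in \eqref{eqthmB} forces $\divergence(X)\equiv 0$ on $E,$ and the equality clause of Lemma \ref{Shen} then identifies $(E,g)$ with an open subset of a geodesic ball in $\mathbb{R}^{n},$ $\mathbb{H}^{n}$ or $\mathbb{S}^{n};$ by the connectedness of $M$ and the real-analyticity of solutions to the overdetermined system \eqref{eqMiaoTam1}, this local rigidity propagates to identify the whole $(M,g,f)$ with that geodesic ball, completing the proof.
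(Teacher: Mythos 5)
Your overall scheme coincides with the paper's: integrate the Robinson--Shen identity of Lemma \ref{Shen} over $E$ truncated at the level $\{f=f_{\max}-\varepsilon\}$, rewrite the vector field via \eqref{eq34dg} so that the $\partial E$ term becomes $2\int_{\partial E}|\nabla f|\big(\mathrm{Ric}(\nu,\nu)-\tfrac{R}{n}\big)dS$, and convert with the Gauss equation. The genuine gap is that the step you yourself call ``the main technical obstacle'' is the whole content of the theorem, and your proposal does not supply it. On $\Sigma_{\varepsilon}=\{f=f_{\max}-\varepsilon\}\cap E$ the flux is $2\int_{\Sigma_{\varepsilon}}|\nabla f|\big(\mathrm{Ric}(\nu,\nu)-\tfrac{R}{n}\big)dS$, so observing that the normal component of $X$ ``degenerates at least as fast as $|\nabla f|$'' is true but not enough: $|\nabla f|$ vanishes on $MAX(f)$ with no a priori rate, and the ``uniform bound on the $(n-1)$-volumes of the $\Sigma_{\varepsilon}$'' that you invoke is asserted, not proved --- these level sets collapse onto the possibly very irregular set $MAX(f)$. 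The paper's proof supplies exactly the missing quantitative input: the reverse Łojasiewicz inequality (Theorem \ref{reverse_Lojasiewicz}) gives $|\nabla f|^{2}\leq C\,(f_{\max}-f)^{\theta}$ near $MAX(f)$, hence $\int_{\{f=t\}\cap E}|\nabla f|\,d\sigma\leq C^{1/2}(f_{\max}-t)^{\theta/2}\,area(\{f=t\}\cap E)$, and this, together with the boundedness of $\mathrm{Ric}(\nu,\nu)-\tfrac{R}{n}$ and the control of the level-set areas coming from the analyticity of $f$, is what yields $\liminf_{\varepsilon\to 0}\int_{\Sigma_{\varepsilon}}\langle X,\nu\rangle\,dS=0$ and therefore the inequality \eqref{eqthmB}. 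Moreover, the paper first disposes of the case $\mathcal{H}^{n-1}(MAX(f)\cap\overline{E})=0$, in which $MAX(f)\cap\overline{E}$ cannot disconnect $E$ from the rest of $M$ and the statement reduces to the global estimate of Theorem \ref{thmboundary} (cf. \cite{BLF}); your argument makes no such case distinction, and it is precisely in the limit analysis of the collapsing level sets (as in \cite{BM1}) that this dichotomy is needed.

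A secondary inaccuracy concerns rigidity: Lemma \ref{Shen} has no ``equality clause'' identifying space-form balls. Equality in \eqref{eqthmB} forces $\mathrm{div}\,X\equiv 0$, hence $\mathring{\mathrm{Hess}}\,f\equiv 0$ on $E$, which by \eqref{IdRicHess} makes $E$ Einstein; analyticity then propagates the Einstein condition to all of $M$, and the identification of $(M^{n},g)$ with a geodesic ball in $\mathbb{R}^{n}$, $\mathbb{H}^{n}$ or $\mathbb{S}^{n}$ requires Miao--Tam's classification of Einstein critical metrics (Theorem 1.1 of \cite{miaotamTAMS}), an ingredient absent from your outline.
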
 

As an immediate consequence of Theorem \ref{thmB} we get the following corollary.

\begin{corollary}
Let $(M^3,\,g,\,f)$ be a three-dimensional compact Miao-Tam critical metric with (possibly disconnected) boundary. Let $E$ be a single connected component of $M\setminus MAX(f)$ and suppose that  $\Gamma_{E}=\partial M\cap E=\displaystyle{\cup_{i=1}^{k}\Gamma_{E_{i}}}.$ Then we have:

$$\sum_{i=1}^{k}\frac{1}{H_{i}}\Big(\frac{R}{6}+\frac{H_{i}^{2}}{4}\Big)area(\Gamma_{E_i})\leq 2\pi \sum_{i=1}^{k}\frac{\chi(\Gamma_{E_{i}})}{H_{i}}.$$ Moreover, if the equality holds, then $M^3$ must be a geodesic ball in $\Bbb{R}^3,$ $\Bbb{H}^{3}$ or $\Bbb{S}^3.$

\end{corollary}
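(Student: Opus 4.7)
The plan is to specialize Theorem~\ref{thmB} to $n=3$ and turn the integral inequality into a component-by-component algebraic inequality by exploiting three standard properties of Miao-Tam critical metrics. First, such metrics have constant scalar curvature $R$, so $R$ may be pulled out of any boundary integral. Second, $|\nabla f|$ is constant on each connected component of $\partial M$ and the mean curvature of that component is tied to it by $H_{i}\,|\nabla f|=1$. To see this, restrict (\ref{eqMiaoTam1}) to $\partial M$, where $f=0$, to obtain $Hess\,f = -\tfrac{1}{n-1}g = -\tfrac{1}{2}g$. Writing $\nabla f = -|\nabla f|\,\nu$ (the gradient points inward since $f\ge 0$ vanishes on $\partial M$) and feeding the Gauss formula into $Hess\,f(X,X) = -(\nabla_{X}X)f$ for a tangent vector $X$, one finds $II = \tfrac{1}{2\,|\nabla f|}\,g|_{\partial M}$, hence $H=1/|\nabla f|$. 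Constancy of $|\nabla f|$ on each component then follows from $X(|\nabla f|^{2})=2\,Hess\,f(\nabla f,X)=0$ for every tangent $X$. Third, since each $\partial E_{i}$ is a closed surface, Gauss-Bonnet yields $\int_{\partial E_{i}} R^{\partial E_{i}}\,dS = 4\pi\chi(\partial E_{i})$.

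With these three ingredients in hand, I would combine them as follows. Inserting $n=3$ in (\ref{eqthmB}) gives
\begin{equation*}
\int_{\partial E}|\nabla f|\Big(R^{\partial E}-\tfrac{R}{3}-\tfrac{H^{2}}{2}\Big)\,dS \ge 0,
\end{equation*}
and decomposing $\partial E = \cup_{i=1}^{k}\partial E_{i}$ and using the fact that $|\nabla f|=1/H_{i}$ is constant on $\partial E_{i}$, this becomes
\begin{equation*}
\sum_{i=1}^{k}\frac{1}{H_{i}}\Big[\,4\pi\chi(\partial E_{i}) - \Big(\tfrac{R}{3}+\tfrac{H_{i}^{2}}{2}\Big)\,area(\partial E_{i})\,\Big] \ge 0.
\end{equation*}
Dividing by $2$ and rearranging produces exactly the stated inequality. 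The equality clause is inherited directly from Theorem~\ref{thmB}: if equality is achieved, then so is equality in (\ref{eqthmB}), forcing $M^{3}$ to be isometric to a geodesic ball in $\mathbb{R}^{3}$, $\mathbb{H}^{3}$, or $\mathbb{S}^{3}$.

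There is no substantive analytic obstacle in this argument; the heavy lifting — the Robinson-Shen type identity and the divergence-plus-maximum-principle structure behind (\ref{eqthmB}) — has already been packaged into Theorem~\ref{thmB}. The only mildly delicate step is verifying the boundary identity $H_{i}|\nabla f|=1$ and the constancy of $|\nabla f|$ on each boundary component before substituting into Theorem~\ref{thmB}; once these are in place, the remainder is a bookkeeping exercise that translates an integral inequality on $\partial E$ into an algebraic inequality indexed by its connected components.
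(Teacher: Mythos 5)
Your proposal is correct and is exactly the derivation the paper intends: the corollary is stated as an immediate consequence of Theorem~\ref{thmB}, obtained by setting $n=3$, using that $R$ is constant and that $|\nabla f|=1/H_{i}$ is constant on each component $\partial E_{i}$ (as recorded in the paper's proof of Theorem~\ref{thmboundary}), and applying Gauss--Bonnet to each closed surface $\partial E_{i}$, with the equality case inherited from Theorem~\ref{thmB}. No gaps; your bookkeeping, including the factor $2$ between the surface scalar curvature and the Gauss curvature, matches the stated inequality.
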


Supposing that $\Gamma_{E}$ is connected, we obtain the following result which can be compared with \cite[Theorem 2]{BDRR}.

\begin{corollary}
Let $(M^3,\,g,\,f)$ be a three-dimensional compact Miao-Tam critical metric with boundary. Let $E$ be a single component of $M\setminus MAX(f)$ and suppose that $\Gamma_{E}=\partial M\cap E$ is connected. In addition, for the case of negative scalar we assume that $H^{2}>-\frac{2}{3}R.$ Then $\Gamma_{E}$ is homeomorphic to a $2$-sphere and $$area(\Gamma_{E})\leq \frac{4\pi}{\Big(\frac{R}{6}+\frac{H^{2}}{4}\Big)}.$$ Moreover, equality holds if and only if $M^{3}$ is isometric to a geodesic ball in $\Bbb{R}^3,$ $\Bbb{H}^{3}$ or $\Bbb{S}^3.$
\end{corollary}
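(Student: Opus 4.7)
The plan is to deduce this corollary directly from Theorem \ref{thmB} specialized to dimension three, combined with the Gauss-Bonnet theorem and the standard boundary regularity of Miao-Tam critical metrics.

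First I would recall two facts about Miao-Tam critical metrics at the boundary that will be essential. Taking the trace of (\ref{eqMiaoTam1}) and evaluating on $\partial M$ (where $f=0$) shows that $\Delta f = -n/(n-1)$ there, and substituting back into (\ref{eqMiaoTam1}) gives $\mathrm{Hess}\,f = -\tfrac{1}{n-1}g$ on $\partial M$. From the first consequence one deduces that $|\nabla f|$ is constant on each connected component of $\partial M$ (because $\mathrm{Hess}\,f(X,\nu)=0$ for $X$ tangent to $\partial M$), and from the second one obtains that each component is totally umbilical, so the mean curvature $H$ is a constant on every boundary component. In particular, on the connected surface $\partial E$, both $|\nabla f|$ and $H$ are constant.

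Next I would specialize Theorem \ref{thmB} to $n=3$. Pulling the constants $|\nabla f|$ and $H$ out of the integral in (\ref{eqthmB}), the inequality becomes
\begin{equation*}
|\nabla f|\left(\int_{\partial E}R^{\partial E}\,dS-\Bigl(\tfrac{R}{3}+\tfrac{H^{2}}{2}\Bigr)\mathrm{area}(\partial E)\right)\ge 0.
\end{equation*}
Since $\partial E$ is a closed connected surface, $R^{\partial E}=2K$ (twice its Gaussian curvature), and Gauss-Bonnet gives $\int_{\partial E}R^{\partial E}\,dS=4\pi\chi(\partial E)$. Dividing by $|\nabla f|>0$ (by Hopf-type considerations) and rewriting $\tfrac{R}{3}+\tfrac{H^{2}}{2}=2\bigl(\tfrac{R}{6}+\tfrac{H^{2}}{4}\bigr)$ one obtains
\begin{equation*}
2\pi\chi(\partial E)\ge\Bigl(\tfrac{R}{6}+\tfrac{H^{2}}{4}\Bigr)\mathrm{area}(\partial E).
\end{equation*}

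Under the standing hypotheses (either $R\ge 0$, in which case $\tfrac{R}{6}+\tfrac{H^{2}}{4}\ge H^{2}/4>0$, or $H^{2}>-\tfrac{2R}{3}$, which rearranges to $\tfrac{R}{6}+\tfrac{H^{2}}{4}>0$), the right-hand side is strictly positive. Consequently $\chi(\partial E)>0$, so the connected orientable surface $\partial E$ must be a $2$-sphere with $\chi=2$, which immediately yields $\mathrm{area}(\partial E)\le 4\pi/(\tfrac{R}{6}+\tfrac{H^{2}}{4})$. The rigidity statement comes for free from the equality case in Theorem \ref{thmB}: if the area bound is saturated then so is (\ref{eqthmB}), forcing $M^{3}$ to be a geodesic ball in $\mathbb{R}^{3}$, $\mathbb{H}^{3}$ or $\mathbb{S}^{3}$; the converse is a direct check on these model spaces. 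The only non-routine step here is verifying the constancy of $|\nabla f|$ and $H$ along $\partial E$, but this is standard and well known in the Miao-Tam framework, so no real obstacle should arise.
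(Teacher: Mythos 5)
Your argument is correct and is exactly the derivation the paper intends: specialize Theorem \ref{thmB} to $n=3$, use that $H=1/|\nabla f|$ is constant and positive on the connected totally umbilical boundary component, apply Gauss--Bonnet, and note that the sign hypotheses make $\frac{R}{6}+\frac{H^{2}}{4}>0$, so $\chi(\partial E)>0$ forces the sphere and the area bound, with rigidity inherited from the equality case of Theorem \ref{thmB}. No substantive difference from the paper's (implicit) proof.
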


\section{Background}
\label{Sec2}

In this section, we will review some basic facts and key lemmas that will be useful for the establishment of the main results. To begin with, we remember that the fundamental equation of a Miao-Tam critical metric, in the tensorial notation, is given by
\begin{equation}
\label{eqfund1} -(\Delta f)g_{ij}+\nabla_{i}\nabla_{j} f-fR_{ij}=g_{ij}.
\end{equation} Taking the trace of (\ref{eqfund1}) we arrive at
\begin{equation}
\label{eqtrace} \Delta f +\frac{R}{n-1}f=-\frac{n}{n-1}.
\end{equation} Putting these facts together, we get
\begin{equation}
\label{eqVstaic2}\nabla_{i}\nabla_{j}f-fR_{ij}=-\frac{Rf+1}{n-1}g_{ij}.
\end{equation} Also, it easy to check from (\ref{eqtrace}) that
\begin{equation}
\label{IdRicHess} f\mathring{Ric}=\mathring{Hess f},
\end{equation}  where  $\mathring{T}$ stands for the traceless of $T.$ Furthermore, a Riemannian manifold $(M^{n},\,g)$ for which there exists a nontrivial function $f$ satisfying (\ref{eqfund1}) must have constant scalar curvature $R$ (see \cite[Proposition 2.1]{CEM} and \cite[Theorem 7]{miaotam}).

It is also important to recall that, choosing appropriate coordinates, $f$ and $g$ are analytic. Hence, the set of regular points of $f$ is dense in $M^n$ (see \cite{CEM}, Proposition 2.1). Thus, at regular points of $f,$ the vector field $\nu=-\frac{\nabla f}{|\nabla f|}$ is normal to $\partial M$ and $|\nabla f|$ is constant (non null) on each connected component of $\partial M$ (see \cite{BDRR}, Sec. 3).

The following lemma, obtained previously in [\cite{BDRR}, Lemma 5], will be useful.

\begin{lemma}
\label{lemB1}
Let $(M^{n},g,f)$ be an $n$-dimensional compact Miao-Tam critical metric with smooth boundary $\partial M.$ Then we have:
\begin{equation}
\label{eqLb}
 \int_{M}f|\mathring{Ric}|^{2}dM_{g}=-\frac{1}{|\nabla f|_{\mid_{\partial M}}}\int_{\partial M}\mathring{Ric}(\nabla f,\nabla f)dS.
\end{equation}
\end{lemma}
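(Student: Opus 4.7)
The plan is to rewrite the integrand by means of identity (\ref{IdRicHess}). Since $f\mathring{Ric}=\mathring{Hess}\,f$ pointwise, one has $f|\mathring{Ric}|^{2}=\langle \mathring{Ric},\,\mathring{Hess}\,f\rangle$, and because $\mathring{Ric}$ is trace-free, the scalar trace part of $Hess\,f$ makes no contribution to this pairing. Hence
$$\int_{M}f|\mathring{Ric}|^{2}\,dM_{g}=\int_{M}\mathring{Ric}_{ij}\,\nabla^{i}\nabla^{j}f\,dM_{g}.$$

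Next I would integrate by parts on this last integral. This produces a boundary term $\int_{\partial M}\mathring{Ric}(\nabla f,\nu)\,dS$ together with an interior term $-\int_{M}(\nabla^{i}\mathring{Ric}_{ij})\,\nabla^{j}f\,dM_{g}$. Now the twice-contracted second Bianchi identity gives $\nabla^{i}\mathring{Ric}_{ij}=\tfrac{n-2}{2n}\nabla_{j}R$, and the scalar curvature of a Miao-Tam critical metric is constant (as recalled immediately after (\ref{IdRicHess})); consequently the divergence of $\mathring{Ric}$ vanishes identically and the interior contribution drops out.

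To rewrite the remaining boundary piece in the desired form, I would invoke the two facts highlighted in \secref{Sec2}: at regular points of $f$ on $\partial M$ the outward unit normal can be written as $\nu=-\nabla f/|\nabla f|$, and $|\nabla f|$ is a non-zero constant along $\partial M$. Pulling this constant outside the integral immediately converts $\int_{\partial M}\mathring{Ric}(\nabla f,\nu)\,dS$ into $-\frac{1}{|\nabla f|}\int_{\partial M}\mathring{Ric}(\nabla f,\nabla f)\,dS$, which is exactly (\ref{eqLb}).

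I do not expect a genuine technical obstacle here: the argument ultimately reduces to a single integration by parts, and the one ingredient that really must be exploited is the constancy of the scalar curvature. Without this, the twice-contracted Bianchi identity would leave a residual $\nabla R$ term with no reason to vanish, and the identity would fail to close up; everything else is routine bookkeeping plus the structural features of $f$ on $\partial M$ recorded in \secref{Sec2}.
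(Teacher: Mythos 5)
Your argument is correct and is essentially the proof behind the cited result ([\cite{BDRR}, Lemma 5]): pair $\mathring{Ric}$ with $\mathring{Hess}\,f$ via (\ref{IdRicHess}), integrate by parts, kill the interior term using the contracted Bianchi identity together with the constancy of $R$, and rewrite the boundary term using $\nu=-\nabla f/|\nabla f|$ with $|\nabla f|$ a nonzero constant on the connected boundary. No gaps.
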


For our purpose,  we need to provide a Robinson-Shen type identity which is essentially motivated by \cite{Rob,Shen} and plays a crucial role in the paper.

\begin{lemma}\label{Shen} (Robinson-Shen Type Identity)
Let $(M^{n},\,g)$ be a Riemannian manifold satisfying (\ref{eqfund1}). Then we have:
\begin{eqnarray*}
{\rm div}\left[\frac{1}{f}\nabla|\nabla f|^{2}-\frac{2\Delta f}{nf}\nabla f\right]=\frac{2}{f}|\mathring{Hessf}|^{2}
\end{eqnarray*} for $\{x\in M\,:\,f(x)>0\},$ where $\mathring{Hess\,f}=Hess\,f-\frac{\Delta f}{n}g.$
\end{lemma}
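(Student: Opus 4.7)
The plan is to expand the divergence term by term using the product rule $\mathrm{div}(X/f) = \mathrm{div}(X)/f - \langle \nabla f, X\rangle/f^{2}$, invoke Bochner's formula, and then use the structural equations \eqref{eqfund1}--\eqref{eqtrace} to force the non-Hessian terms to cancel.

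Concretely, first I would write
\[
\mathrm{div}\!\left[\tfrac{1}{f}\nabla|\nabla f|^{2}\right] = \tfrac{1}{f}\Delta|\nabla f|^{2} - \tfrac{1}{f^{2}}\langle\nabla f,\nabla|\nabla f|^{2}\rangle,
\]
and
\[
\mathrm{div}\!\left[\tfrac{2\Delta f}{nf}\nabla f\right] = \tfrac{2(\Delta f)^{2}}{nf} + \tfrac{2}{nf}\langle\nabla\Delta f,\nabla f\rangle - \tfrac{2\Delta f}{nf^{2}}|\nabla f|^{2}.
\]
Then Bochner's formula $\frac12 \Delta|\nabla f|^{2} = |\mathrm{Hess}\,f|^{2} + \langle\nabla f,\nabla\Delta f\rangle + \mathrm{Ric}(\nabla f,\nabla f)$ is applied to the first line. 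The combination $\frac{|\mathrm{Hess}\,f|^{2}}{f}\cdot 2 - \frac{2(\Delta f)^{2}}{nf}$ immediately produces the desired $\frac{2}{f}|\mathring{\mathrm{Hess}\,f}|^{2}$, since by definition $|\mathring{\mathrm{Hess}\,f}|^{2} = |\mathrm{Hess}\,f|^{2} - (\Delta f)^{2}/n$.

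It remains to show that the leftover expression
\[
E := \tfrac{2(n-1)}{nf}\langle\nabla f,\nabla\Delta f\rangle + \tfrac{2}{f}\mathrm{Ric}(\nabla f,\nabla f) - \tfrac{1}{f^{2}}\langle\nabla f,\nabla|\nabla f|^{2}\rangle + \tfrac{2\Delta f}{nf^{2}}|\nabla f|^{2}
\]
vanishes identically. For this I exploit two consequences of the critical metric equation. First, because $R$ is constant, \eqref{eqtrace} gives $\nabla\Delta f = -\tfrac{R}{n-1}\nabla f$, so the first term of $E$ becomes $-\tfrac{2R}{nf}|\nabla f|^{2}$. Second, contracting \eqref{eqfund1} with $\nabla f \otimes \nabla f$ and recalling $\tfrac12\langle\nabla f,\nabla|\nabla f|^{2}\rangle = \mathrm{Hess}\,f(\nabla f,\nabla f)$, I obtain
\[
\tfrac12\langle\nabla f,\nabla|\nabla f|^{2}\rangle = f\,\mathrm{Ric}(\nabla f,\nabla f) + (1+\Delta f)|\nabla f|^{2}.
\]
Substituting this into the third term of $E$ cancels the Ricci contribution exactly with the second term of $E$, leaving only multiples of $|\nabla f|^{2}$. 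A short calculation then shows that the remaining scalar coefficient is $\tfrac{1}{nf^{2}}\bigl[-fR-n-(n-1)\Delta f\bigr]|\nabla f|^{2}$, which vanishes by \eqref{eqtrace}.

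The only real obstacle here is organizational, keeping the factors of $f$, $f^{2}$, and $n$ in order so that the Ricci terms cancel and the scalar-curvature identity for $\Delta f$ kills the remainder. Once the two inputs (constancy of $R$ and the contracted fundamental equation) are identified, the whole identity collapses into the stated form, valid wherever $f \neq 0$, which by analyticity determines it on all of $M^{n}$.
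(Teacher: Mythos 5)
Your argument is correct and follows essentially the same route as the paper's own proof: expand the divergence, apply Bochner's formula, and cancel the leftover terms using the contracted fundamental equation \eqref{eqfund1} together with \eqref{eqtrace} and the constancy of $R$ (which the paper also uses, citing \cite{CEM,miaotam}). The only blemish is a harmless factor of $2$ dropped in the final scalar coefficient, which should read $\tfrac{2}{nf^{2}}\bigl[-fR-n-(n-1)\Delta f\bigr]|\nabla f|^{2}$; since the bracket vanishes by \eqref{eqtrace}, the conclusion is unaffected.
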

\begin{proof}
First of all, we set $X=\frac{1}{f}\nabla|\nabla f|^{2}-\frac{2\Delta f}{nf}\nabla f$ as a vector field in the interior of $M^n$ in order to deduce
\begin{eqnarray*}
\frac{f}{2}{\rm div}\,X&=&\frac{1}{2}\Delta|\nabla f|^{2}+\frac{f}{2}\langle\nabla f^{-1},\nabla|\nabla f|^{2}\rangle-\frac{1}{n}(\Delta f)^{2}-\frac{f}{n}\left\langle\nabla\left(\frac{\Delta f}{f}\right),\nabla f\right\rangle\\
&=&\frac{1}{2}\Delta|\nabla f|^{2}-\frac{1}{f}Hessf(\nabla f,\nabla f)-\frac{1}{n}(\Delta f)^{2}\nonumber\\&&-\frac{1}{n}\langle\nabla\Delta f,\nabla f\rangle+\frac{\Delta f}{nf}|\nabla f|^{2},
\end{eqnarray*} which can be rewritten, using the classical Bochner's formula, as follows
\begin{eqnarray*}
\frac{f}{2}{\rm div}X&=&|\mathring{Hessf}|^{2}+Ric(\nabla f,\nabla f)-\frac{1}{f}Hessf(\nabla f,\nabla f)\\
&&+\frac{n-1}{n}\langle\nabla\Delta f,\nabla f\rangle+\frac{\Delta f}{nf}|\nabla f|^{2}.
\end{eqnarray*} Hence, it suffices to use Eqs. (\ref{eqfund1}) and (\ref{eqtrace}) to obtain
\begin{eqnarray*}
\frac{f}{2}{\rm div}X&=&|\mathring{Hessf}|^{2}-\frac{(\Delta f+1)}{f}|\nabla f|^{2}-\frac{R}{n}|\nabla f|^{2}+\frac{\Delta f}{nf}|\nabla f|^{2}\\
&=&|\mathring{Hessf}|^{2}.
\end{eqnarray*} This completes the proof of the lemma.
\end{proof}

As anticipated, for the reader’s convenience, we include here an alternative proof of Theorem \ref{thmboundary} as a consequence of the Robinson-Shen type identity obtained in Lemma \ref{Shen}.

\subsection{Proof of Theorem \ref{thmboundary}}
\begin{proof} Initially, we invoke Lemma \ref{Shen} jointly with (\ref{IdRicHess}) to infer
\begin{eqnarray}
\label{eq3dr}
{\rm div}\left[\frac{1}{f}\nabla|\nabla f|^{2}-\frac{2\Delta f}{nf}\nabla f\right]&=&\frac{2}{f}|\mathring{Hessf}|^{2}\nonumber\\&=&2f|\mathring{Ric}|^{2}.
\end{eqnarray}

On the other hand, by the fundamental equation we immediately have

\begin{equation}
\label{eq34dg}
\frac{1}{f}\Big(\nabla |\nabla f|^{2}-\frac{2\Delta f}{n}\nabla f\Big)=2\Big(Ric(\nabla f)-\frac{R}{n}\nabla f\Big).
\end{equation} This substituted into (\ref{eq3dr}) gives 
\begin{equation}
\label{eq12s}
{\rm div} \Big(Ric(\nabla f)-\frac{R}{n}\nabla f\Big)=f|\mathring{Ric}|^{2}
\end{equation} and hence, by using the divergence lemma we get

\begin{eqnarray}
\label{eq12s1}
\int_{\partial M}\langle Ric(\nabla f)-\frac{R}{n}\nabla f, \nu\rangle dS\geq 0,
\end{eqnarray} so that
\begin{equation}
\label{eqka}
\int_{\partial M}Ric(\nu,\nu)dS\le \frac{R}{n}area(\partial M).
\end{equation}

At the same time, the second fundamental form of $\partial M$  is given by
\begin{equation}
\label{secff}
h_{ij}=\langle \nabla_{e_{i}}\nu,e_{j}\rangle,
\end{equation} where $\{e_{1},\ldots,e_{n-1}\}$ is  an orthonormal frame on $\partial M.$ Then,  one sees that

\begin{equation}
\label{1a}
h_{ij}=-\langle \nabla_{e_{i}}\frac{\nabla f}{|\nabla f|},e_{j}\rangle=\frac{1}{(n-1)|\nabla f|}g_{ij}.
\end{equation} Moreover, the mean curvature $H=\frac{1}{|\nabla f|}$  is constant and $\partial M$ is totally umbilical (see also \cite[Theorem 7]{miaotam}). The Gauss equation implies

\begin{equation}
\label{2a1}
2Ric(\nu,\nu)+R^{\partial M}=R+\frac{(n-2)}{(n-1)}H^{2},
\end{equation} where $R^{\partial M}$ stands for the scalar curvature of $(\partial M,g_{|_{\partial M}});$ for more details see  Eq. (45) in \cite{CEM}. Now, comparing (\ref{2a1}) with (\ref{eqka}) we obtain

\begin{equation}
area(\partial M)\leq \frac{1}{\Big(\frac{n-2}{n}R+\frac{n-2}{n-1}H^{2}\Big)} \int_{\partial M}R^{\partial M}dS,
\end{equation} which gives the asserted inequality. 

Next, supposing that the equality holds, it suffices to return to Eqs. (\ref{eq12s}) and (\ref{eq12s1})  in order to conclude that $\int_{M}f|\mathring{Ric}|^{2}dM_{g}=0.$ But, since $f$ cannot vanish identically in a non-empty open set, we deduce that $|\mathring{Ric}|^{2}=0$ and therefore, $M^{n}$ is an Einstein manifold. Then, we invoke Theorem 1.1 in \cite{miaotamTAMS} to conclude that $M^{n}$ is isometric to a geodesic ball in $\Bbb{R}^n,$ $\Bbb{H}^{n}$ or $\Bbb{S}^n.$ So, the proof is completed.
\end{proof}

In order to proceed, it is crucial to recall a gradient estimate, so called {\it the reverse Łojasiewicz inequality}, on the behaviour of an analytic function near a critical point (see \cite[Theorem 1.1.7]{tesestefano}).  

\begin{theorem}[Reverse Łojasiewicz Inequality, \cite{tesestefano}]
\label{reverse_Lojasiewicz}
Let $(M^{n},\,g)$ be a Riemannian mani\-fold, let $f:M\to \Bbb{R}$ be a smooth function and $p\in M$ be a local maximum point satisfying:

\begin{enumerate}
\item $|\nabla^{2}f|(p)\neq 0,$
\item the set $\{x\in M;\,f(x)=f(p)\}$ is compact. Here, every point in the set $\{f(x)=f(p)\}$ is a local maximum.
\end{enumerate} Then for every $\theta<1,$ there exists a neighborhood  $U_{p}$ containing $p$ and a real number $C_{p}>0$ such that for every $x\in U_{p}$ it holds
$$|\nabla f|^{2}(x)\leq C_{p}\left(f(p)-f(x)\right)^{\theta}.$$
\end{theorem}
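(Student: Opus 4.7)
The plan is to reduce the inequality to a Glaeser-type gradient bound for the non-negative smooth function $u := f(p)-f$ and then absorb the mismatch in exponents by shrinking the neighborhood. Since $p$ is a local maximum of $f$, the function $u$ is non-negative on some open neighborhood $V$ of $p$ and satisfies $u(p)=0$, $\nabla u(p)=0$. The two hypotheses are there to rule out pathologies: hypothesis $(1)$ guarantees the quantitative non-triviality of $u$ near $p$, while hypothesis $(2)$ ensures that the open set $V$ on which $u \geq 0$ can be chosen so that the estimates below have uniform constants.

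The core step is to establish the stronger $\theta=1$ bound
\begin{equation*}
|\nabla u(x)|^{2} \leq 2M\, u(x) \qquad \text{for } x \in U_{p}',
\end{equation*}
on a sufficiently small neighborhood $U_{p}' \subset V$, where $M$ is any constant dominating the Hessian norm of $u$ on $V$ (augmented by curvature contributions from $g$). The classical Glaeser argument applies: for any unit vector $e \in T_{x_{0}}M$, consider $\varphi(t)=u(\exp_{x_{0}}(te))$, which is non-negative on a symmetric interval $[-T,T]$ and satisfies $|\varphi''|\leq M$. Taylor's theorem yields $\varphi(t) \leq \varphi(0)+\varphi'(0)t+(M/2)t^{2}$, and evaluating at $t^{*} = -\varphi'(0)/M$ together with $\varphi(t^{*})\geq 0$ forces the discriminant bound $\varphi'(0)^{2}\leq 2M\varphi(0)$. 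Taking the supremum over unit vectors $e$ produces the displayed Glaeser estimate.

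With this in hand, one shrinks to a still smaller neighborhood $U_{p} \subset U_{p}'$ on which $u \leq 1$; this is possible because $u$ is continuous with $u(p)=0$. On $U_{p}$, for any $\theta\in(0,1)$ one has $u \leq u^{\theta}$, and therefore
\begin{equation*}
|\nabla f(x)|^{2} = |\nabla u(x)|^{2} \leq 2M\, u(x) \leq 2M\, u(x)^{\theta} = C_{p}\bigl(f(p)-f(x)\bigr)^{\theta},
\end{equation*}
which is the desired inequality with $C_{p}=2M$.

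The delicate part of the plan is the Glaeser step: the argument requires that the point $t^{*}=-\varphi'(0)/M$ belongs to $[-T,T]$, i.e.\ that $|\nabla u(x_{0})|\leq MT$. Since $\nabla u(p)=0$ and $u$ is smooth, $|\nabla u(x_{0})|$ tends to zero as $x_{0}\to p$, so this is automatic once $x_{0}$ is sufficiently close to $p$, provided that $T$ can be bounded below uniformly; this is exactly where compactness of $\{f=f(p)\}$ intervenes, preventing the non-negativity region of $u$ from collapsing along exceptional directions. Once this is secured, the rest of the argument is a routine Taylor estimation and the exponent reduction from $\theta=1$ to $\theta<1$ is immediate.
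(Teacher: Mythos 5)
Your argument is essentially correct, but note that the paper does not prove this statement at all: it is quoted verbatim from Borghini's thesis (\cite{tesestefano}, Theorem 1.1.7), where the hypotheses $|\nabla^2 f|(p)\neq 0$ and compactness of $\{f=f(p)\}$ are tailored to that proof. Your Glaeser-type route is more elementary and in fact proves more: near a local maximum of a $C^2$ function one gets the borderline estimate $|\nabla f|^2\leq 2M\,(f(p)-f)$ (i.e.\ $\theta=1$), from which the stated inequality for $\theta\in(0,1)$ follows by shrinking to $\{f(p)-f\leq 1\}$; neither hypothesis (1) nor (2) is used, which is logically harmless since you prove a stronger statement, but you should say so rather than suggest they are doing work in your proof.

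Two points deserve correction. First, your attribution of the "delicate part" to hypothesis (2) is wrong: compactness of the full level set $\{f=f(p)\}$ (which may contain points where $\nabla f\neq 0$, far from $p$) is irrelevant to the local estimate at $p$. The uniform lower bound on $T$ is obtained simply by fixing $r>0$ so small that $\overline{B_{2r}(p)}$ is compact, contained in the neighborhood where $f\leq f(p)$, and geodesics issuing from $B_r(p)$ exist for time $r$ and remain in $B_{2r}(p)$; one then takes $T=r$, sets $M=\sup_{\overline{B_{2r}(p)}}|\mathrm{Hess}\,f|$ (replace $M$ by $\max\{M,1\}$ to avoid the degenerate case $M=0$), and finally shrinks $U_p$ so that $|\nabla f|\leq Mr$ there, which is possible since $\nabla f(p)=0$. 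With this bookkeeping the Taylor--Glaeser step is complete, because $t^{*}=-\varphi'(0)/M\in[-T,T]$ and $\varphi(t^{*})\geq 0$. Second, no ``curvature contributions'' need to be added to $M$: along a unit-speed geodesic $\gamma$ one has exactly $\varphi''(t)=\mathrm{Hess}\,f(\gamma'(t),\gamma'(t))$, so the Hessian bound alone suffices. Finally, for completeness, the values $\theta\leq 0$ allowed by the statement are degenerate (for $\theta=0$ the inequality is just a bound on $|\nabla f|^2$ on a precompact neighborhood, and for $\theta<0$ it is trivial off the level set), so the meaningful range $\theta\in(0,1)$ is precisely the one your argument covers.
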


\begin{remark}
\label{remA}
We highlight that the condition $|\nabla^{2}f|(p)\neq 0$ in Theorem  \ref{reverse_Lojasiewicz} was recently removed by  Borghini, Chru\'sciel and Mazzieri in \cite[Theorem 2.2]{BCM}. In other words, the Reverse Lojasiewicz Inequality remains true without any assumption on the Hessian.
\end{remark}

It is known that gradient estimates are important and essential for deriving convergence results in different geometric flows, and most of them are obtained by analytic methods. Roughly speaking, the reverse Łojasiewicz inequality tells us that the bound for the gradient near critical points can be obtained around the local maxima (or local minima) under suitable conditions. This estimate will play a crucial role in the proof of Theorem \ref{thmB}. For more details and other applications see, for instance, \cite{FM}.

Now, following the procedure adopted in \cite{BM1}, we shall present a No Islands Lemma which is a slightly modification of Lemma 5.1 in \cite{BM1}. To be precise, we shall show that the set $\Gamma_{E}=\partial M\cap E$ is always nonempty, where $E$ is a connected component of $M\setminus MAX(f).$

\begin{proposition}
\label{prop1a}
Let $(M^{n},\,g,\,f)$  be an $n$-dimensional compact Miao-Tam critical metric with boundary $\partial M$ (possibly disconnected). Let $E$ be a connected component of $M\setminus MAX(f).$ Then $\Gamma_{E}= \partial M\cap E \neq \emptyset.$
\end{proposition}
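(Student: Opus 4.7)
The plan is to argue by contradiction, assuming that $E\cap\partial M=\emptyset$. Under this assumption $f>0$ on the closure $\overline{E}$ (since $f$ vanishes only on $\partial M$), and the topological boundary $\partial E$ of $E$ in $M$ is entirely contained in $MAX(f)$. The strategy mimics the one employed in \cite{BM1} and hinges on the Robinson-Shen type divergence formula of Lemma \ref{Shen} together with the reverse \L ojasiewicz inequality (Theorem \ref{reverse_Lojasiewicz}).

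First, as derived in the proof of Theorem \ref{thmboundary}, combining Lemma \ref{Shen} with identity (\ref{IdRicHess}) shows that the smooth vector field $X=Ric(\nabla f)-\frac{R}{n}\nabla f$ satisfies ${\rm div}\,X=f|\mathring{Ric}|^{2}\ge 0$ on $M$. Since $\nabla f\equiv 0$ on $MAX(f)\supset \partial E$, the field $X$ vanishes identically on $\partial E$, so morally the divergence theorem should give $\int_{E}f|\mathring{Ric}|^{2}\,dV=0$. To make this rigorous, I would apply the divergence theorem on the smoothly bounded open sets $E_{\delta}=\{x\in E:f(x)<f_{\max}-\delta\}$, which for a.e. $\delta>0$ have boundary $\Sigma_{\delta}=\{f=f_{\max}-\delta\}\cap E$ smooth by Sard's theorem (note that $\partial E_\delta\cap\partial M=\emptyset$ by hypothesis). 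This yields
\begin{equation*}
\int_{E_{\delta}}f|\mathring{Ric}|^{2}\,dV=\int_{\Sigma_{\delta}}\langle X,\nu_{\delta}\rangle\,dS,\qquad \nu_{\delta}=\nabla f/|\nabla f|.
\end{equation*}

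The key analytic step is to show that the right-hand side tends to zero along a sequence $\delta_{k}\to 0$. A direct computation, using equation (\ref{eqVstaic2}) to rewrite $Ric(\nabla f,\nabla f)$ in terms of $Hess\,f(\nabla f,\nabla f)=\tfrac{1}{2}\langle\nabla|\nabla f|^{2},\nabla f\rangle$, and noting that $f$ is bounded away from $0$ on $\overline{E}$, gives an estimate of the form $|\langle X,\nu_{\delta}\rangle|\le C|\nabla f|$ on $\Sigma_{\delta}$ with $C$ depending only on ambient geometric quantities. Next, since $f$ is real analytic and $|\nabla^{2}f|(p)\ne 0$ at every maximum point $p$ (this follows from evaluating (\ref{eqVstaic2}) at $p$: the vanishing of $Hess\,f(p)$ would force $Rf_{\max}=-n$, a degenerate case that can be ruled out separately), the reverse \L ojasiewicz inequality applies and yields $|\nabla f|\le C\delta^{\theta/2}$ on $\Sigma_{\delta}$ for some $0<\theta<1$ and small $\delta$. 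Combined with a co-area estimate controlling the area of $\Sigma_{\delta}$ as $\delta\to 0$ (using the finite $(n-1)$-Hausdorff measure of the analytic set $MAX(f)$), one concludes $\int_{\Sigma_{\delta_{k}}}|\langle X,\nu_{\delta_{k}}\rangle|\,dS\to 0$ along a suitable sequence.

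Passing to the limit gives $\int_{E}f|\mathring{Ric}|^{2}\,dV=0$; since $f>0$ on $E$, this forces $\mathring{Ric}\equiv 0$ on $E$, and hence on all of $M$ by real analyticity. Thus $(M^{n},g)$ is Einstein, and the Miao-Tam rigidity result (Theorem 1.1 in \cite{miaotamTAMS}) gives that $M^{n}$ is isometric to a geodesic ball in $\mathbb{R}^{n}$, $\mathbb{H}^{n}$ or $\mathbb{S}^{n}$. In such a ball, however, the potential $f$ is radial with a single maximum at the center, so $MAX(f)$ is a single point and $M\setminus MAX(f)$ is a unique connected component that meets $\partial M$, contradicting $E\cap\partial M=\emptyset$. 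The main obstacle, as indicated above, is the analytic control of the boundary term as $\delta\to 0$: it is precisely here that the reverse \L ojasiewicz inequality, together with the real analyticity of the critical metric equation, is essential.
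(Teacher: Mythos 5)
Your overall strategy is workable in spirit, but the step you yourself flag as ``the key analytic step'' is exactly where the argument breaks down as written. You need $\int_{\Sigma_{\delta_k}}|\langle X,\nu_{\delta_k}\rangle|\,dS\to 0$, and you propose to get it from $|\langle X,\nu_\delta\rangle|\le C|\nabla f|\le C\delta^{\theta/2}$ (reverse \L ojasiewicz, $\theta<1$) together with ``a co-area estimate'' and the finiteness of $\mathcal{H}^{n-1}(MAX(f))$. But the coarea formula only gives that $\delta\mapsto \mathrm{area}(\Sigma_\delta)$ is \emph{integrable} near $0$ (and the refined version with weight $1/|\nabla f|$ only that $\delta^{-\theta/2}\mathrm{area}(\Sigma_\delta)$ is integrable); integrability is compatible with blow-up of order $\delta^{-\alpha}$ for any $\alpha<1$, and since $\theta/2<1/2$ the product $\delta^{\theta/2}\,\mathrm{area}(\Sigma_\delta)$ need not tend to zero along any sequence --- the exponents fall just short even after optimizing $\theta$. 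Finiteness of the $(n-1)$-Hausdorff measure of the analytic set $MAX(f)$ does not, by itself, bound the areas of the nearby level sets $\Sigma_\delta$; an actual uniform upper bound on $\mathrm{area}(\Sigma_\delta)$ as $\delta\to 0$ would require genuinely new input not contained in the results you invoke. In addition, the hypothesis $|\nabla^{2}f|(p)\neq 0$ of Theorem \ref{reverse_Lojasiewicz} forces you to exclude the case $Rf_{\max}=-n$ (when $R<0$), which you merely assert ``can be ruled out separately''; ruling it out requires a maximum-principle argument (e.g.\ $u=f_{\max}-f\ge 0$ solves $\Delta u=-\tfrac{R}{n-1}u$ and vanishes at an interior point, so $u\equiv 0$), i.e.\ precisely the kind of argument that already proves the proposition outright.

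Indeed, the paper's proof is a short weak maximum principle argument applied to $f$ itself, with no divergence identity, no \L ojasiewicz estimates and no rigidity theorem: if $E\cap\partial M=\emptyset$ then $\overline{E}\setminus E\subset MAX(f)$, and from (\ref{eqtrace}) one has $\Delta f\le 0$ on $E$ (immediately when $R\ge 0$, and after first showing $f\le -n/R$ when $R<0$); hence $\min_{\overline{E}}f=\min_{\overline{E}\setminus E}f\ge f_{\max}$, so $f\equiv f_{\max}$ on the open set $E$, and by analyticity $f$ is constant on $M$, contradicting $f|_{\partial M}=0$. I would encourage you to either adopt this elementary route or, if you wish to keep the flux argument, supply a genuine upper bound for $\mathrm{area}(\Sigma_\delta)$ near the top level and a complete treatment of the degenerate case $Rf_{\max}=-n$.
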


\begin{proof}
We argue by contradiction, assuming that $\Gamma_{E}= \emptyset.$ In this situation, since $MAX(f)\cap \partial M=\emptyset$ we deduce that 

\begin{equation}
\label{eq54g1}
\overline{E}\setminus E\subset MAX(f),
\end{equation} where $\overline{E}$ stands for the closure of $E$ in $M.$ 

From now on we divide the proof in two cases. Firstly, we assume that $M^n$ has nonnegative scalar curvature. Whence, we have from (\ref{eqtrace}) that $\Delta f\leq 0$ in $E.$ Then, we may apply the Weak Maximum Principle and (\ref{eq54g1}) to infer

\begin{equation}
\min_{\overline{E}} f=\min_{\overline{E}\setminus E}f\geq \min_{MAX(f)}f=f_{max}.
\end{equation} Hence,  $f\equiv f_{max}$ in the open set $E,$ and by analyticity of $f$ jointly with
fact that $f|_{\partial M} = 0$ we immediately have that $f\equiv 0$ on $M^{n}.$ This leads to a contradiction.

On the other hand, since $M^n$ has negative scalar curvature, it follows from (\ref{eqtrace}) and the fact that the laplacian of $f$ is nonpositive at the maximum points, one sees that $f(q)\le -\frac{n}{R}$ for any point $q\in MAX(f)$ and hence, one sees that

\begin{eqnarray}
\Delta f&\leq& \frac{R}{n-1}\frac{n}{R}-\frac{n}{n-1}=0\,\,\,\,\,\,\hbox{on}\,\,\,\,E.
\end{eqnarray} In order to obtain a contradiction it therefore suffices to repeat the same arguments used in the proof of the first part. This finishes the proof of the proposition.
\end{proof}

To proceed, it is essential to highlight that on a Miao-Tam critical metric there exists a function  $\Phi$ satisfying

\begin{equation}\label{EqPHI0}
\Delta\Phi-\frac{1}{f}\langle \nabla f,\nabla\Phi\rangle=2|\mathring{Hessf}|^{2}\geq 0.
\end{equation} Indeed, for $R=0,$ it suffices to combine Eq.  (\ref{eqtrace}) and Lemma ~\ref{Shen} to obtain 
\begin{equation}
\label{eqPhiR10}
\Phi=|\nabla f|^{2}+\frac{2}{n-1}f.
\end{equation} Otherwise, if $R\neq 0,$ we can use (\ref{eqfund1}) jointly with Lemma~\ref{Shen} to deduce 

\begin{eqnarray*}
2|\mathring{Hessf}|^{2}&=&f{\rm div}\left[\frac{1}{f}\nabla|\nabla f|^{2}+\frac{2(n-1)\Delta f}{nRf}\nabla\Delta f \right]\\
&=&f{\rm div}\left[\frac{1}{f}\nabla|\nabla f|^{2}+\frac{(n-1)}{nRf}\nabla(\Delta f)^{2} \right]\\
&=&f{\rm div}\left[\frac{1}{f}\nabla\left(|\nabla f|^{2}+\frac{(n-1)}{nR}(\Delta f)^{2}\right) \right].
\end{eqnarray*} Whence, we get
 \begin{equation}
 \label{eqPhiR20}
\Phi=|\nabla f|^{2}+\frac{(n-1)}{nR}(\Delta f)^{2}.
\end{equation}

Now we must pay attention to the coefficient $1/f$ which appears in (\ref{EqPHI0}). The next lemma will be useful to deal with this obstacle. 

\begin{lemma}
\label{lemK1}
Let $(M^{n},\,g,\,f)$  be an $n$-dimensional compact Miao-Tam critical metric with boundary $\partial M$ (possibly disconnected). Let $E$ be a connected component of $M\setminus MAX(f)$ with $\Gamma_E=\partial M\cap E$ nonempty (possibly disconnected). Suppose that $$H^{2}\geq \frac{n(n-1)}{Rf_{max}^{2}+2nf_{max}}\,\,\,\,\,\,\hbox{on}\,\,\,\Gamma_E.$$ Then we have

$$|\nabla f|^{2}\leq \frac{R(f_{max}^{2}-f^{2})+2n(f_{max}-f)}{n(n-1)}$$ on the whole $E.$

\end{lemma}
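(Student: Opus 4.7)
The strategy is to apply the maximum principle to the auxiliary function $\Phi$ from (\ref{eqPhiR10})--(\ref{eqPhiR20}), which by construction satisfies the subelliptic inequality
\begin{equation*}
L\Phi := \Delta\Phi-\frac{1}{f}\langle\nabla f,\nabla\Phi\rangle=2|\mathring{Hess\,f}|^{2}\geq 0
\end{equation*}
in the interior of $M$. Using the trace identity (\ref{eqtrace}) to substitute $\Delta f = -(Rf+n)/(n-1)$ into (\ref{eqPhiR20}), one rewrites $\Phi$, up to an additive constant depending only on $n$ and $R$ (and which vanishes in the case $R=0$ of (\ref{eqPhiR10})), as
\begin{equation*}
\widetilde\Phi = |\nabla f|^{2}+\frac{Rf^{2}}{n(n-1)}+\frac{2f}{n-1}.
\end{equation*}
Since $L$ annihilates constants, $L\widetilde\Phi=L\Phi\ge 0$, and the claimed inequality is exactly the assertion $\widetilde\Phi\le \frac{Rf_{\max}^{2}+2nf_{\max}}{n(n-1)}$ on $\overline E$.

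First I would compute the boundary values of $\widetilde\Phi$ on the two pieces of the topological boundary of $E\cap\mathrm{int}(M)$ in $M$. On $\partial E = \partial M\cap E$ one has $f\equiv 0$ and, as recalled in Section \ref{Sec2}, $|\nabla f|=1/H$; hence $\widetilde\Phi=1/H^{2}$, and the hypothesis $H^{2}\ge n(n-1)/(Rf_{\max}^{2}+2nf_{\max})$ yields $\widetilde\Phi\le \frac{Rf_{\max}^{2}+2nf_{\max}}{n(n-1)}$ there. The remaining piece $\overline E\setminus E$ is contained in $MAX(f)$ and is non-empty thanks to Proposition \ref{prop1a}; there $f\equiv f_{\max}$, and the Reverse {\L}ojasiewicz Inequality (Theorem \ref{reverse_Lojasiewicz}) forces $|\nabla f|\to 0$ upon approach, so $\widetilde\Phi$ extends continuously with limiting value exactly $\frac{Rf_{\max}^{2}+2nf_{\max}}{n(n-1)}$. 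Thus on both components of the boundary the inequality holds.

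Next I would apply the strong maximum principle to $\widetilde\Phi$ on the connected open set $E\cap\mathrm{int}(M)$, where $f>0$, the drift $\nabla f/f$ is smooth, and $L$ is a classically uniformly elliptic operator on compact subsets. Either the supremum of $\widetilde\Phi$ over $\overline E$ is attained on the boundary---in which case the previous paragraph finishes the argument---or it is attained at an interior point, and then $\widetilde\Phi$ must be constant on $E\cap\mathrm{int}(M)$; by continuity up to $\overline E$ this constant equals the (already controlled) boundary values and is still bounded by $\frac{Rf_{\max}^{2}+2nf_{\max}}{n(n-1)}$. Rearranging the resulting inequality $\widetilde\Phi\le\frac{Rf_{\max}^{2}+2nf_{\max}}{n(n-1)}$ on $\overline E$ yields the pointwise bound on $|\nabla f|^{2}$ stated in the lemma.

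\textbf{Main obstacle.} The principal technical difficulty is the singular coefficient $1/f$ in $L$, which blows up on $\partial E\subset\partial M$ and forbids a direct max-principle argument on all of $\overline E$. The resolution is to confine the argument to $E\cap\mathrm{int}(M)$ where $L$ is a classical smooth elliptic operator, and then to control the two portions of the topological boundary by different mechanisms: at $\partial E$ the mean-curvature assumption supplies the explicit Dirichlet-type bound, whereas at $\overline E\cap MAX(f)$ one needs the gradient-decay estimate $|\nabla f|\to 0$ provided by the Reverse {\L}ojasiewicz Inequality. Without this latter ingredient, $|\nabla f|$ could in principle remain bounded away from zero along a sequence approaching the critical set $MAX(f)$, making the limiting boundary value of $\widetilde\Phi$ strictly larger than $\frac{Rf_{\max}^{2}+2nf_{\max}}{n(n-1)}$ and breaking the comparison.
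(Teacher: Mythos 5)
Your proposal is correct and takes essentially the same approach as the paper: it applies the maximum principle to the same Robinson--Shen auxiliary function (your $\widetilde\Phi$ is the paper's $\Phi$ minus the constant $\tfrac{n}{(n-1)R}$, unifying the cases $R=0$ and $R\neq 0$), with the same boundary values on $\partial E$ via $H=1/|\nabla f|$ and on $MAX(f)$, the only difference being that the paper tames the singular drift by exhausting $E$ with the truncated domains $E\cap\{\varepsilon\le f\le f_{\max}-\varepsilon\}$ rather than by your strong-maximum-principle dichotomy on $E\cap\mathrm{int}(M)$. Note only that the appeal to the Reverse {\L}ojasiewicz inequality is superfluous: points of $MAX(f)$ are interior critical points of $f$, so $|\nabla f|\to 0$ there by continuity alone.
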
  
\begin{proof}  We start the proof recalling that, as a consequence of Lemma~\ref{Shen}, we get

$$\Delta\Phi-\frac{1}{f}\langle \nabla f,\nabla\Phi\rangle=2|\mathring{Hessf}|^{2},$$ where the function $\Phi$ is given by
$$\Phi=\left\{%
\begin{array}{ll}
    \displaystyle |\nabla f|^{2}+\frac{2}{(n-1)}f, & \hbox{if $R=0,$} \\
    \\
    \displaystyle |\nabla f|^{2}+\frac{(n-1)}{nR}(\Delta f)^{2}, & \hbox{if $R\neq0.$} \\
\end{array}%
\right.$$

Now, we consider subdomains $E_{\varepsilon}=E\cap \{\varepsilon \leq f\leq f_{max}-\varepsilon\}$ for $\varepsilon$ sufficiently small. Moreover, we already know by analyticity of $f$ that the set of critical values of $f$ is discrete. Hence, there exists a $\delta>0$ such that for every $0<\varepsilon\leq \delta$ the level sets $\{f=\varepsilon\}$ and $\{f=f_{max}-\varepsilon\}$ are regular. Notice also that $\frac{1}{f}\leq \frac{1}{\varepsilon}$ on $E_{\varepsilon}.$  

Proceeding, we apply the Maximum Principle for the function $\Phi$ in order to infer

\begin{equation}\label{MPphi}
\max_{E_{\varepsilon}}\Phi\leq\max_{\partial E_{\varepsilon}}\Phi.
\end{equation} 

On the other hand, it is easy to check that

$$\Phi|_{MAX(f)}=\left\{%
\begin{array}{ll}
    \displaystyle\frac{2}{n-1}f_{\max}, & \hbox{if $R=0,$} \\
    \\
    \displaystyle\frac{n-1}{nR}\left(\frac{Rf_{\max}+n}{n-1}\right)^{2}, & \hbox{if $R\neq0,$} \\
\end{array}%
\right.$$

and

$$\Phi|_{\partial M}=\left\{%
\begin{array}{ll}
    \displaystyle|\nabla f|^{2}_{\mid_{\partial M}}, & \hbox{if $R=0,$} \\
    \\
    \displaystyle|\nabla f|^{2}_{\mid_{\partial M}}+\frac{n}{(n-1)R}, & \hbox{if $R\neq0.$} \\
\end{array}%
\right.$$ Consequently, taking into account that $H=\frac{1}{|\nabla f|}$ on $\partial M,$ our assumption implies

 $$\Phi|_{\Gamma_E}\leq\left\{%
\begin{array}{ll}
    \displaystyle \frac{2}{n-1}f_{\max},  & \hbox{if $R=0,$} \\
    \\
    \displaystyle \frac{n-1}{nR}\left(\frac{Rf_{\max}+n}{n-1}\right)^{2}, & \hbox{if $R\neq0.$} \\
\end{array}%
\right.$$ Finally, letting $\varepsilon\to 0^{+}$ in (\ref{MPphi}) we achieve at

 $$\Phi\leq\left\{%
\begin{array}{ll}
    \displaystyle \frac{2}{n-1}f_{\max},  & \hbox{if $R=0,$} \\
    \displaystyle \frac{n-1}{nR}\left(\frac{Rf_{\max}+n}{n-1}\right)^{2}, & \hbox{if $R\neq0,$} \\
\end{array}%
\right.$$ on the whole $E.$ This finishes the proof of the lemma. 
\end{proof}

\section{Proof of the Main Results}

\subsection{Proof of Theorem \ref{thmMainA}}

\begin{proof}
To begin with, taking into account that the mean curvature on each component of the boundary is given by $H_{i}=\frac{1}{|\nabla f|}_{\big|\partial M_{i}},$ where $\partial M_{i}$ is the component $i$ of the boundary of $M^n,$ it suffices to show that if a Miao-Tam critical metric satisfies
\begin{eqnarray}\label{inequality}
|\nabla f|^{2}_{|\partial M}\leq \frac{Rf_{\max}^{2}+2nf_{\max}}{n(n-1)},
\end{eqnarray} then there is only one possibility for such a manifold, that is, $M^{n}$ must be isometric to a geodesic ball in $\mathbb{R}^{n},$ $\mathbb{S}^{n}$ or $\mathbb{H}^{n}.$ 

In order to do so, let us first recall that, choosing appropriate coordinates, $f$ and $g$ are analytic (see  \cite[Proposition 2.1]{CEM}). Hence, $f$ cannot vanish identically in a non-empty open set. In particular, we can choose a positive number $\eta>0$ such that,  for $0<\varepsilon\leq\eta,$ each level set $\{x\in M;\;f(x)=\varepsilon\}$ is regular. 

From now on we consider, for simplicity, the set $M_{\varepsilon}=\{f\geq\varepsilon\}.$ In this case, we use the Maximum Principle into (\ref{EqPHI0}) to infer
$$\max_{M_{\varepsilon}}\Phi\leq\max_{\partial M_{\varepsilon}}\Phi.$$ Therefore, for $R=0,$ we have from (\ref{eqPhiR10}) that

$$\displaystyle\lim_{\varepsilon\rightarrow0^{+}}\max_{\partial M_{\varepsilon}}\Phi=|\nabla f|^{2}_{|\partial M},$$ and similarly, for $R\neq0,$ we obtain by (\ref{eqPhiR20}) that
$$\displaystyle\lim_{\varepsilon\rightarrow0^{+}}\max_{\partial M_{\varepsilon}}\Phi=|\nabla f|^{2}_{|\partial M}+\frac{n}{(n-1)R}.$$ Therefore, it suffices to use (\ref{inequality}) to arrive at

$$\max_{\partial M_{\eta}}\Phi\leq\frac{2}{n-1}f_{\max},\;\;\;\;\hbox{when}\,\,R=0$$
and
$$\max_{\partial M_{\eta}}\Phi\leq\frac{n-1}{nR}\left(\frac{Rf_{\max}}{n-1}+\frac{n}{n-1}\right)^{2},\;\;\;\;\hbox{when}\,\,R\neq 0.$$

To proceed notice that the set $\{p\in M; f(p)=f_{\max}\}\subset M_{\eta},$ and for every $p\in\{f(p)=f_{\max}\}$ we have 
$$\Phi(p)=\frac{2f_{\max}}{n-1},\;\;\;\;\hbox{if}\,\,R=0$$
and
$$\Phi(p)=\frac{n-1}{nR}\left(\frac{Rf_{\max}}{n-1}+\frac{n}{n-1}\right)^{2},\;\;\;\;\hbox{if}\,\,R\neq 0.$$

Finally, it follows from the Strong Maximum Principle that $\Phi$ is identically cons\-tant on $M_{\eta},$ and since $\eta$ was chosen arbitrarily small, we infer that $\Phi$ is cons\-tant on $M.$ Therefore, returning to Eq. (\ref{EqPHI0}) we use (\ref{IdRicHess}) to conclude that $M$ is an Einstein manifold. Now, we are in position to use Theorem 1.1  of \cite{miaotamTAMS} to conclude that $M^{n}$ is isometric to a geodesic ball in a simply connected space form $\mathbb{R}^{n}$ or $\mathbb{S}^{n}$ or  $\mathbb{H}^{n}.$ This finishes the proof of the theorem.

\end{proof}

\subsection{Proof of Corollary \ref{corthm1}}
\begin{proof}
On integrating (\ref{eqtrace}) we get

$$\int_{M}\Delta f\, dM_{g}=-\frac{n}{n-1}Vol(M),$$ and using the Stokes' formula we obtain

\begin{equation*}
\frac{area(\partial M)}{H}=\frac{n}{n-1}Vol(M).
\end{equation*} In conjunction with Theorem \ref{thmMainA}, one sees that

\begin{equation}
\frac{area(\partial M)}{\sqrt{\frac{n(n-1)}{R(f_{\max})^{2}+2nf_{\max}}}}\leq \frac{n}{n-1}Vol(M).
\end{equation} In particular, by using the scalar flat hypothesis, it follows that

\begin{equation}
\label{1cf}
\frac{area(\partial M)}{Vol(M)}\leq \sqrt{\frac{n^{2}}{2(n-1)f_{\max}}},
\end{equation} and this proves the assertion.

Next, if the equality holds in (\ref{1cf}), it suffices to use the equality case of Theorem  \ref{thmMainA} to conclude that $M^n$ must be isometric to a geodesic ball in $\Bbb{R}^n$ of  radius $\sqrt{2(n-1)f_{\max}}.$ So, the proof is completed. 

\end{proof}

\subsection{Proof of Corollary \ref{cor2thm1}}
\begin{proof} Since $\partial M$ is isometric to a standard sphere $\mathbb{S}^{n-1}(r)$ we may apply Theorem 11 in \cite{BLF} to deduce
$$H^2\leq\Big(\frac{n-1}{r}\Big)^2.$$ Therefore, using that $r>\sqrt{2(n-1)f_{\max}}$ we immediately conclude that $H^2<\frac{n-1}{2f_{\max}}$ and hence, the result follows by Theorem \ref{thmMainA}.
\end{proof}

\subsection{Proof of Theorem \ref{thmlocalA}}
First of all, we define a functional $\mathcal{F}:[0,\,f_{\max})\to \Bbb{R}$ by

\begin{equation}
\label{defU}
\mathcal{F}(t)= \displaystyle\left(\frac{R(f_{\max}^{2}-t^{2})+2n(f_{\max}-t)}{n(n-1)}\right)^{-\frac{n}{2}}\int_{\{f=t\}\cap E}|\nabla f|d\sigma.
\end{equation} 

We remark that $\mathcal{F}$ is well defined. Indeed, as it was previously mentioned $f$ is analytical and hence, it follows from \cite{Krantz} that the level sets of $f$ have locally finite $\mathcal{H}^{n-1}$-measure. Besides the level sets have finite hypersurface area. We also point out that $\mathcal{F}(t)$ is constant on the geodesic balls in space forms.

In order to prove Theorem \ref{thmlocalA} we need to provide a couple of lemmas. In the first one we will show that the function $\mathcal{F}$ is monotonically nonincreasing provided the mean curvature of $\Gamma_E$ is suitably bounded from below. This is the content of the following lemma.

\begin{lemma}
\label{lemAthmAlocal}
Let $(M^n,\,g,\,f)$ be an $n$-dimensional compact Miao-Tam critical metric with (possibly disconnected) boundary $\partial M,$ let $E$ be a single connected component of $M\setminus MAX(f).$ Suppose that $$H^{2}\ge \frac{n(n-1)}{Rf_{\max}^{2}+2nf_{\max}}\,\,\,\,\hbox{on}\,\,\,\Gamma_E.$$ Then $\mathcal{F}$ is monotonically nonincreasing.
\end{lemma}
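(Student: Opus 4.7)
The plan is to establish $\mathcal F'(t)\le 0$ on $(0,f_{\max})$ away from the discrete set of critical values of $f$; by analyticity of $f$ this exceptional set is null, so pointwise monotonicity almost everywhere suffices.

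First, I would rewrite $I(t):=\int_{\{f=t\}\cap E}|\nabla f|\,d\sigma$ via the divergence theorem applied to $\nabla f$ on the region $U_t:=\{f\ge t\}\cap E$. The boundary of $U_t$ consists of the regular level set $\Sigma_t:=\{f=t\}\cap E$ (with outward unit normal $-\nabla f/|\nabla f|$) together with $\overline E\cap MAX(f)$, where $\nabla f$ vanishes identically. Hence $I(t)=-\int_{U_t}\Delta f\,dV_g$, and inserting the trace equation $\Delta f=-(Rf+n)/(n-1)$ from (\ref{eqtrace}) yields
\begin{equation*}
I(t)=\frac{1}{n-1}\int_{U_t}(Rf+n)\,dV_g.
\end{equation*}
The coarea formula then gives $I(t)=\tfrac{1}{n-1}\int_t^{f_{\max}}(Rs+n)\bigl(\int_{\Sigma_s}|\nabla f|^{-1}d\sigma\bigr)ds$, so that
\begin{equation*}
I'(t)=-\frac{Rt+n}{n-1}\int_{\Sigma_t}\frac{d\sigma}{|\nabla f|}.
\end{equation*}

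Next, setting $A(t)=\bigl(R(f_{\max}^2-t^2)+2n(f_{\max}-t)\bigr)/(n(n-1))$ so that $\mathcal F(t)=A(t)^{-n/2}I(t)$, I would compute $A'(t)=-2(Rt+n)/(n(n-1))$ and combine the two derivatives via the product rule to obtain, after cancellations,
\begin{equation*}
\mathcal F'(t)=\frac{Rt+n}{n-1}\,A(t)^{-\frac n2-1}\int_{\Sigma_t}\frac{|\nabla f|^2-A(t)}{|\nabla f|}\,d\sigma.
\end{equation*}
The non-positivity of $\mathcal F'(t)$ then reduces to two sign checks. Lemma \ref{lemK1}, which is precisely where the hypothesis $H^2\ge n(n-1)/(Rf_{\max}^2+2nf_{\max})$ on $\partial E$ enters, guarantees $|\nabla f|^2\le A(t)$ on all of $E$ and hence on $\Sigma_t$, making the integrand nonpositive. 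The prefactor $(Rt+n)/(n-1)$ is clearly nonnegative when $R\ge 0$; in the negative scalar curvature case the bound $f\le -n/R$ established inside the proof of Proposition \ref{prop1a} forces $Rt+n\ge Rf_{\max}+n\ge 0$ for every $t\in[0,f_{\max}]$.

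I expect the chief technical point to be the careful application of the divergence theorem: the boundary piece coming from $MAX(f)$ can be highly irregular, so the argument relies on the vanishing of $\nabla f$ there together with the finite hypersurface-measure and coarea statements that analyticity of $f$ supplies. Once the identity for $\mathcal F'(t)$ displayed above is in hand, monotonicity is an immediate consequence of Lemma \ref{lemK1} and Proposition \ref{prop1a}, both already established in the paper.
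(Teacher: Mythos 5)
Your argument is correct and rests on the same two inputs as the paper's proof — Lemma \ref{lemK1} (which converts the boundary hypothesis into $|\nabla f|^{2}\le h$ on $E$, where $h$ is the function whose value on $\{f=t\}$ is your $A(t)$) and the sign $Rt+n\ge 0$, which in the negative scalar curvature case comes from the bound $f\le -n/R$ of Proposition \ref{prop1a}, exactly as you say — but the execution differs. The paper never differentiates $\mathcal F$: it computes the pointwise identity ${\rm div}(h^{-n/2}\nabla f)=h^{-\frac n2-1}\Delta f\,(h-|\nabla f|^{2})\le 0$ and integrates it over the slab $\{t_{1}\le f\le t_{2}\}\cap E$ between two level sets, so Stokes gives $\mathcal F(t_{2})\le\mathcal F(t_{1})$ directly, and the region stays away from $MAX(f)$ altogether. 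Your route instead computes $\mathcal F'(t)$, which is where the extra technicalities you flag enter: the divergence theorem on $U_{t}=\{f\ge t\}\cap E$ must be justified across the possibly irregular piece $MAX(f)\cap\overline E$ (this does work — $MAX(f)$ is a level set of an analytic function, hence of finite $\mathcal H^{n-1}$-measure, so $U_{t}$ has finite perimeter and the flux vanishes there because $\nabla f\equiv 0$ on $MAX(f)$ — but it is genuine measure-theoretic overhead that the slab argument avoids); and passing from $\mathcal F'\le 0$ off the finitely many critical values to ``$\mathcal F$ nonincreasing'' needs continuity of $\mathcal F$ across those values, which is not automatic from an a.e.\ sign of the derivative alone, though it does follow from your own representation $I(t)=\frac{1}{n-1}\int_{U_{t}}(Rf+n)\,dV$ and should be said explicitly. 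So both proofs are driven by the same weighted-divergence mechanism; yours yields the explicit formula for $\mathcal F'(t)$ (and makes the sign of $Rt+n$ more transparent than the paper does), while the paper's slab integration buys a shorter, technically lighter proof.
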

\begin{proof}
Firstly, we consider the function
$$h=\frac{R(f_{\max}^{2}-f^{2})+2n(f_{\max}-f)}{n(n-1)}.$$ Then, one easily verifies that

\begin{eqnarray*}
{\rm div}(h^{-\frac{n}{2}}\nabla f)&=&h^{-\frac{n}{2}}\Delta f-\frac{n}{2}h^{-\frac{n}{2}-1}\langle\nabla f,\nabla h\rangle\\
&=&h^{-\frac{n}{2}-1}\Delta f(h-|\nabla f|^{2}).
\end{eqnarray*} Hence, it follows from Lemma \ref{lemK1} that

$${\rm div}\left[\Big(\frac{R(f_{\max}^{2}-f^{2})+2n(f_{\max}-f)}{n(n-1)}\Big)^{-\frac{n}{2}}\nabla f\right]\leq0.$$  Upon integrating this over $\{t_{1}\leq f\leq t_{2}\}\cap E,$ with $t_{1}<t_{2},$ we use the Stokes' formula to infer

\begin{eqnarray*}
0&\geq&\int_{\{t_{1}\leq f\leq t_{2}\}\cap E}{\rm div}(h^{-\frac{n}{2}}\nabla f)d\sigma=\int_{\partial\{t_{1}\leq f\leq t_{2}\}\cap E} h^{-\frac{n}{2}}\langle\nabla f, \nu\rangle dS\\
&=&\int_{\{f=t_{1}\}\cap E}h^{-\frac{n}{2}}\left\langle \nabla f,-\frac{\nabla f}{|\nabla f|}\right\rangle dS+\int_{\{f=t_{2}\}\cap E}h^{-\frac{n}{2}}\left\langle \nabla f,\frac{\nabla f}{|\nabla f|}\right\rangle dS\\
&=&-\int_{\{f=t_{1}\}\cap E}h^{-\frac{n}{2}}|\nabla f| dS+\int_{\{f=t_{2}\}\cap E}h^{-\frac{n}{2}}|\nabla f| dS\\
&=&-\mathcal{F}(t_{1})+\mathcal{F}(t_{2}),
\end{eqnarray*} where $\nu=\frac{\nabla f}{|\nabla f|}$ stands for the unit normal to $\{f=t_{1}\}$ and $\{f=t_{2}\},$ respectively. Therefore, it follows that $\mathcal{F}(t_{2})\leq \mathcal{F}(t_{1})$ for $t_{1}<t_{2}.$ This concludes the proof of the lemma.
\end{proof}

Proceeding, we need to analyse the behaviour of the functional $\mathcal{F}$ when $t\to f_{\max}.$ In this sense, we have the following lemma.

\begin{lemma}
\label{lemBthmAlocal}
Let $(M^n,\,g,\,f)$ be an $n$-dimensional compact Miao-Tam critical metric with (possibly disconnected) boundary $\partial M,$ let $E$ be a single connected component of $M\setminus MAX(f).$ Suppose that $\mathcal{H}^{n-1}(MAX(f)\cap \overline{E})>0.$ Then $\displaystyle \lim\limits_{t\rightarrow  f_{\max}^{-}}\mathcal{F}(t)=+\infty.$
\end{lemma}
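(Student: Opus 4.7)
The plan is to control the two factors in $\mathcal{F}(t) = h(t)^{-n/2}\int_{\{f=t\}\cap E}|\nabla f|\,d\sigma$ separately as $t\to f_{\max}^{-}$, where $h(t)=(f_{\max}-t)[R(f_{\max}+t)+2n]/[n(n-1)]$. Since $Rf_{\max}+n>0$ (trivially when $R\geq 0$, and by Eq.~(\ref{eq17k}) when $R<0$), $h(t)\sim c_{0}(f_{\max}-t)$ with $c_{0}>0$, so the prefactor blows up like $(f_{\max}-t)^{-n/2}$. It therefore suffices to produce a lower bound
$$\int_{\{f=t\}\cap E}|\nabla f|\,d\sigma \;\geq\; c_{1}(f_{\max}-t)^{\alpha}$$
for some exponent $\alpha<n/2$.

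To do this I would first rewrite the level-set integral as a bulk integral. Applying the divergence theorem to $\Delta f = -(Rf+n)/(n-1)$ on the annular sets $\{t\leq f\leq f_{\max}-\varepsilon\}\cap E$ and letting $\varepsilon\to 0^{+}$ (the boundary contribution at $\{f=f_{\max}-\varepsilon\}$ vanishes because Theorem~\ref{reverse_Lojasiewicz} applies at every $p\in MAX(f)$, the Hessian hypothesis being verified by $\Delta f(p)=-(Rf_{\max}+n)/(n-1)\neq 0$, so $|\nabla f|^{2}\leq C(f_{\max}-f)^{\theta}$ near $MAX(f)$ for any fixed $\theta\in(0,1)$) yields
$$\int_{\{f=t\}\cap E}|\nabla f|\,d\sigma \;=\; \int_{\{f\geq t\}\cap E}\frac{Rf+n}{n-1}\,dM \;\geq\; c_{2}\,\mathrm{Vol}(\{f\geq t\}\cap E),$$
with $c_{2}>0$ since $Rf+n\geq Rf_{\max}+n>0$ throughout $E$.

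Next I would convert the positivity of $\mathcal{H}^{n-1}(MAX(f)\cap\overline{E})$ into a volume bound for $\{f\geq t\}\cap E$ via a tubular-neighbourhood argument. The reverse Łojasiewicz bound integrated along a minimizing geodesic from a point of $MAX(f)$ gives
$$f_{\max}-f(x)\leq C'\,\mathrm{dist}(x,MAX(f))^{1/(1-\theta/2)},$$
so the $r$-tubular neighbourhood of $MAX(f)\cap\overline{E}$ in $E$ is contained in $\{f\geq f_{\max}-C'r^{1/(1-\theta/2)}\}\cap E$ for small $r>0$. Since $MAX(f)=\{f=f_{\max}\}$ is a real analytic subvariety, the assumption $\mathcal{H}^{n-1}(MAX(f)\cap\overline{E})>0$ forces $MAX(f)\cap\overline{E}$ to contain an open piece of a smooth $(n-1)$-dimensional analytic stratum accessible from $E$, and a standard Minkowski content estimate gives that the $n$-volume of its $r$-tubular neighbourhood inside $E$ is bounded below by $cr$ for small $r$. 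Setting $r=(s/C')^{1-\theta/2}$ with $s=f_{\max}-t$ yields $\mathrm{Vol}(\{f\geq t\}\cap E)\geq c_{3}(f_{\max}-t)^{1-\theta/2}$, hence
$$\mathcal{F}(t)\geq c_{4}(f_{\max}-t)^{1-\theta/2-n/2}\longrightarrow+\infty\quad\text{as}\quad t\to f_{\max}^{-},$$
the exponent being negative for any $\theta\in(0,1)$ and $n\geq 2$.

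The main obstacle will be the geometric measure theoretic step just invoked, namely turning the positive $(n-1)$-Hausdorff measure of $MAX(f)\cap\overline{E}$ into a genuinely \emph{one-sided} tubular neighbourhood inside $E$ of $n$-volume $\gtrsim r$. This requires both the real analytic stratification of the subvariety $MAX(f)$ and the fact that $E$ is a full connected component of $M\setminus MAX(f)$: a regular $(n-1)$-stratum locally separates two components of $M\setminus MAX(f)$, so one must argue that the positive-measure part of $MAX(f)\cap\overline{E}$ is approached from the $E$-side, so that half of each local tubular neighbourhood contributes volume inside $E$ rather than in the complementary component.
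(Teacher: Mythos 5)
Your proposal is correct in outline but follows a genuinely different route from the paper. The paper bounds the integrand from below via the classical (forward) \L ojasiewicz gradient inequality, $|\nabla f|\ge c\,(f_{\max}-f)$ near $MAX(f)$, so that $\mathcal{F}(t)\gtrsim (f_{\max}-t)^{-\frac{n-2}{2}}\,area(\{f=t\}\cap E)$, and then reduces the lemma to showing $\limsup_{t\to f_{\max}^{-}} area(\{f=t\}\cap E)>0$, a step it delegates to the argument of Proposition 5.4 in \cite{BM1}. You instead convert the flux $\int_{\{f=t\}\cap E}|\nabla f|\,d\sigma$ into a bulk quantity via the divergence theorem and (\ref{eqtrace}), bound it below by $c\,\mathrm{Vol}(\{f\ge t\}\cap E)$, and estimate that volume from below by a one-sided collar of a regular piece of the analytic set $MAX(f)$, using the reverse \L ojasiewicz inequality (Theorem \ref{reverse_Lojasiewicz}) to translate distance to $MAX(f)$ into closeness of $f$ to $f_{\max}$. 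Your route is self-contained where the paper cites \cite{BM1}, and it gives a quantitative rate $\mathcal{F}(t)\gtrsim (f_{\max}-t)^{1-\frac{\theta}{2}-\frac{n}{2}}$; the price is that it needs the Hessian nondegeneracy hypothesis of Theorem \ref{reverse_Lojasiewicz} on $MAX(f)$, which the paper's forward inequality avoids. The geometric measure theory step you flag is indeed the crux, but it goes through: the singular set of the analytic set $MAX(f)$ is $\mathcal{H}^{n-1}$-null, so the hypothesis yields a regular point $q\in MAX(f)\cap\overline{E}$ near which $MAX(f)$ is a smooth hypersurface; $q$ is by definition a limit of points of $E$, so one of the two local sides, being connected and disjoint from $MAX(f)$, meets and hence lies in $E$, and a one-sided collar of radius $r$ over a compact piece of that hypersurface has volume at least $cr$. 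Also, in your divergence-theorem step the equality is not needed: the inner boundary term at $\{f=f_{\max}-\varepsilon\}$ enters with a favorable sign, so you may simply discard it instead of proving it vanishes (which would additionally require control of the areas of those level sets).

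One point does need repair: you justify $Rf_{\max}+n>0$ (used both for $\Delta f\neq 0$ on $MAX(f)$, i.e.\ the hypothesis $|\nabla^{2}f|\neq 0$ of Theorem \ref{reverse_Lojasiewicz}, and for the positivity of your constant $c_{2}$) by citing (\ref{eq17k}), but for $R<0$ that inequality only gives $Rf_{\max}+n\ge 0$. The strict inequality is true, but needs an extra argument: if $Rf_{\max}+n=0$ with $R<0$, set $w=f+\frac{n}{R}\le 0$; by (\ref{eqtrace}), $\Delta w=-\frac{R}{n-1}w\le 0$, so $w$ is superharmonic and attains its interior maximum value $0$ on $MAX(f)$, whence $w\equiv 0$ by the strong maximum principle, contradicting $f=0$ on $\partial M$. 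With this inserted, and the usual finite covering of the compact set $MAX(f)$ to make the constants of Theorem \ref{reverse_Lojasiewicz} uniform, your argument is complete.
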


\begin{proof}
First of all, we use the Łojasiewicz inequality (cf. \cite{Loja}, Theorem 4)  to deduce that for each point $p\in MAX(f)$ there exists a neighborhood $V_{p}\subset M$ of $p$ and real numbers $c_{p}>0$ and $0< \theta_{p}< 1,$ such that for each $x\in V_{p}$ we have $$|\nabla f|(x)\geq c_{p}\big(f_{\max}-f(x))^{\theta_{p}}.$$ Moreover, restricting the neighborhood $V_{p}$ if necessary, we may assume that $f_{\max}-f<1$ on $V_{p}$ such that for every $x\in V_{p}$ we have $$|\nabla f|(x)\geq c\big(f_{\max}-f(x)).$$ Next, taking into account that $MAX(f)$ is compact, it admits a finite covering given by $V_{p_1},\ldots,V_{p_k}.$ In particular, choosing $c=\min \{c_{p_1},\ldots, c_{p_k}\},$ we have that $V=V_{p_1}\cup \ldots\cup V_{p_k}$ is a neighborhood of $MAX(f)$ so that
\begin{equation}
\label{eqklp}
|\nabla f|(x)\geq c\big(f_{\max}-f(x)),
\end{equation} for all $x\in V.$ Again, since $M$ is compact and $f$ is analytical it holds that for $t$ sufficiently close to $f_{\max}$ we have $\{f=t\}\cap E\subset V.$ 

From now on we need to analyse the behaviour of $\mathcal{F}(t)$ for these values of $t.$ To do so, observe that from (\ref{eqklp}) it holds

\begin{eqnarray}
\label{auxINQFt}
\mathcal{F}(t)&=& \displaystyle\left(\frac{R(f_{\max}^{2}-t^{2})+2n(f_{\max}-t)}{n(n-1)}\right)^{-\frac{n}{2}}\int_{\{f=t\}\cap E}|\nabla f|d\sigma \nonumber\\&\geq& c \left[\frac{R(f_{\max}^{2}-t^{2})+2n(f_{\max}-t)}{n(n-1)}\right]^{-\frac{n}{2}}(f_{\max}-t)\cdot area(\{f=t\}\cap E)\nonumber\\&=&\frac{c}{\big(f_{\max}-t\big)^ {\frac{n-2}{2}}}\left[\frac{R(f_{\max}+t)+2n}{n(n-1)}\right]^ {-\frac{n}{2}}\cdot area(\{f=t\}\cap E).
\end{eqnarray} Therefore, to conclude the proof of the lemma we need only to show that if $\mathcal{H}^{n-1}(MAX(f)\cap\overline{E})>0,$ then $$\limsup_{t\rightarrow  f_{\max}^{-}}\, area(\{f=t\}\cap E)>0.$$ But, to prove this, it suffices to repeat exactly the same arguments used by Bor\-ghini and Mazzieri \cite{BM1} in the last part of the proof of Proposition 5.4. We highlight that the argument used by them in this part of the proof is in fact general. So, we omit the details, leaving them to the interested reader.  
\end{proof}

\subsubsection{Conclusion of the proof of Theorem  \ref{thmlocalA}}

\begin{proof}
We start the proof supposing that 

$$H^{2}\geq \frac{n(n-1)}{Rf_{\max}^{2}+2nf_{\max}}\,\,\,\,\hbox{on}\,\,\,\,\Gamma_E.$$ In this situation, it follows from Lemma \ref{lemAthmAlocal} that the function $\mathcal{F}(t)$ is monotonically nonincreasing. At the same time, we deduce $$\lim_{t\rightarrow  f_{\max}^{-}}\mathcal{F}(t)\leq \mathcal{F}(0),$$ and the value of $\mathcal{F}(0)$ yields

\begin{eqnarray*}
\lim_{t\rightarrow f_{\max}^{-}}\mathcal{F}(t)&\leq& \left(\frac{Rf^{2}_{\max}+2n f_{\max}}{n(n-1)}\right)^{-\frac{n}{2}}\int_{\Gamma_E}|\nabla f|\\
&\leq&\left(\frac{Rf^{2}_{\max}+2n f_{\max}}{n(n-1)}\right)^{-\frac{(n-1)}{2}}\cdot area(\Gamma_E),
\end{eqnarray*} where we have omitted the volume form. Therefore, we obtain $$\lim_{t\rightarrow  f_{\max}^{-}}\mathcal{F}(t)<\infty.$$ Now, we invoke Lemma \ref{lemBthmAlocal} to conclude that $\mathcal{H}^{n-1}(MAX(f)\cap \overline{E})=0.$ This imme\-diately guarantees that $MAX(f)\cap \overline{E}$ can not disconnect the domain $E$ from the rest of the manifold $M.$ Hence, $E$ is the only connnected component of $M\setminus MAX(f),$ which implies that $\partial M\cap E=\partial M.$ Finally, it suffices to apply Theorem  \ref{thmMainA} to conclude the proof of Theorem \ref{thmlocalA}. 
\end{proof}

\vspace{0.70cm}
We now pass to the proof of Theorem \ref{thmB}.

\subsection{Proof of Theorem \ref{thmB}}
\begin{proof} 
Firstly, we already know by Lemma \ref{Shen} that

\begin{equation}
\label{eq12a}
{\rm div}\left[\frac{1}{f}\nabla|\nabla f|^{2}-\frac{2\Delta f}{nf}\nabla f\right]=\frac{2}{f}|\mathring{Hessf}|^{2}\ge 0.
\end{equation} Next, taking into account that $M^n$ is compact and using the properties of the function $f,$ it follows that there exists a $\varepsilon>0$ such that the set $\{f=t\}$ is regular for every $0\le t\le \varepsilon$ and $f_{max}-\varepsilon\le t<f_{max}.$ 

By one hand, upon integrating (\ref{eq12a}) over $\{\varepsilon<f<f_{max}-\varepsilon\}\cap E$ we obtain

\begin{equation}
\label{9jk}
\int_{\{f=f_{max}-\varepsilon\}\cap E}\left\langle \frac{1}{f}\Big(\nabla |\nabla f|^{2}-\frac{2\Delta f}{n}\nabla f\Big),\,\nu\right\rangle\ge \int_{\{f=\varepsilon\}\cap E}\left\langle\frac{1}{f}\Big(\nabla |\nabla f|^{2}-\frac{2\Delta f}{n}\nabla f\Big),\,\nu\right\rangle,
\end{equation}  where $\nu=\frac{\nabla f}{|\nabla f|}$ is the unit normal to $\{\varepsilon<f<f_{max}-\varepsilon\}\cap E$ and we have omitted the volume forms.

On the other hand, we already know from (\ref{eq34dg}) that

\begin{equation*}
\frac{1}{f}\left\langle \nabla |\nabla f|^{2}-\frac{2\Delta f}{n}\nabla f,\,\frac{\nabla f}{|\nabla f|}\right\rangle=2|\nabla f|\Big( Ric(\nu,\nu)-\frac{R}{n}\Big).
\end{equation*} Plugging this data into (\ref{9jk}) yields

\begin{equation}
\label{tg1}
\int_{\{f=f_{max}-\varepsilon\}\cap E}|\nabla f|\Big(Ric(\nu,\,\nu)-\frac{R}{n}\Big)\geq \int_{\{f=\varepsilon\}\cap E}|\nabla f|\Big(Ric(\nu,\nu)-\frac{R}{n}\Big).
\end{equation}

Now, we claim that 
\begin{equation}
\label{oi9}
\liminf_{\varepsilon \to 0}\int_{\{f=f_{max}-\varepsilon\}\cap E}|\nabla f|\Big(Ric(\nu, \nu)-\frac{R}{n}\Big)=0.
\end{equation} To prove this, since $Ric(\nu, \nu)-\frac{R}{n}$ is bounded, it suffices to show that

$$\liminf_{t\to f_{max}}\int_{\{f=t\}\cap E}|\nabla f|=0.$$ Indeed, by Theorem \ref{reverse_Lojasiewicz} and Remark \ref{remA}, given a $\theta<1,$ there exists a neighborhood  $V$ of $MAX(f)$ and a real number $C>0$ such that
$$|\nabla f|^{2}(x)\leq C\left(f_{\max}-f(x)\right)^{\theta},$$ for every $x\in V.$ Observe that we are considering the same arguments used in the beginning of the proof of Lemma \ref{lemBthmAlocal} in order to obtain $V,$ that is, $t$ is chosen sufficiently close to $f_{\max}$ so that $\{f=t\}\cap E\subset V.$

Proceeding, for $t$ close to $f_{\max}$ we immediately have

$$\int_{\{f=t\}\cap E}|\nabla f|\leq C^{\frac{1}{2}}\left(f_{\max}-t\right)^{\frac{\theta}{2}}area\big(\{f=t\}\cap E\big).$$ 
From this, it follows that

$$\liminf_{t\to f_{max}}\int_{\{f=t\}\cap E}|\nabla f|=0,$$ as wished.

Now, taking $\liminf\limits_{\varepsilon \to 0}$ in (\ref{tg1}) we achieve at

\begin{equation*}
\int_{\Gamma_{E}}|\nabla f|\Big(Ric(\nu,\nu)-\frac{R}{n}\Big)\leq 0.
\end{equation*} At the same time, by Gauss Equation it holds

\begin{equation}
\label{2a}
2Ric(\nu,\nu)+R^{\Gamma_{E}}=R+\frac{(n-2)}{(n-1)}H^{2},
\end{equation} where $R^{\Gamma_{E}}$ stands for the scalar curvature of $(\Gamma_{E},g_{|_{\Gamma_{E}}}).$ Then, it follows that 

\begin{equation}
\int_{\Gamma_{E}}|\nabla f|\Big(R^{\Gamma_{E}}-\frac{n-2}{n}R-\frac{n-2}{n-1}H^{2}\Big)\geq 0,
\end{equation} as asserted.

Finally, if the equality holds it suffices to use (\ref{eq12a}) jointly with (\ref{IdRicHess}) to conclude that $|\mathring{Ric}|^{2}=0$ on $E,$ i.e., $E$ is an Einstein manifold and hence, by the analyticity of the metric we deduce that $M^{n}$ is Einstein. Therefore, we apply once more Theorem 1.1 in \cite{miaotamTAMS} to conclude that $M^{n}$ is isometric to a geodesic ball in $\Bbb{R}^n,$ $\Bbb{H}^{n}$ or $\Bbb{S}^n.$ So, the proof is finished. 
\end{proof}


\begin{thebibliography}{BB}

\bibitem{BalRi1} Baltazar, H. and Ribeiro Jr., E.: Remarks on critical metrics of the scalar curvature and vo\-lu\-me functionals on compact manifolds with boundary. {\it Pacific J. Math.} 297 (2018) 29-45.

\bibitem{BalRi2} Baltazar, H., Di\'ogenes, R. and Ribeiro Jr., E.: Isoperimetric inequality and Weitzenböck type formula for critical metrics of the volume.  {\it Israel J. Math.} 234 (2019), no. 1, 309-329.

\bibitem{BDR21}  Baltazar, H., Di\'ogenes, R. and Ribeiro Jr., E.: Volume functional of compact 4-manifolds with a prescribed boundary metric. {\it J. Geom. Anal.}  31 (2021) 4703-4720.

\bibitem{BLF} Barbosa, E., Lima, L. and Freitas, A.: The generalized Pohozaev-Schoen identity and some geometric applications. {\it Commun. Anal. Geom.} 28 (2020) 223-242.

\bibitem{BDR} Barros, A., Di\'ogenes, R. and Ribeiro Jr., E.: Bach-Flat critical metrics of the volume functional on 4-dimensional manifolds with boundary. {\it J. Geom. Anal.} 25 (2015) 2698-2715.

\bibitem{BS} Barros, A. and Da Silva, A.: Rigidity for critical metrics of the volume functional. {\it Math. Nachr.} 291 (2019) 709-719.

\bibitem{BDRR} Batista, R., Di\'ogentes, R., Ranieri, M. and Ribeiro Jr., E.: Critical metrics of the volume functional on compact three-manifolds with smooth boundary. {\it J. Geom. Anal.} 27 (2017) 1530-1547.

\bibitem{besse} Besse, A.: Einstein manifolds. Springer-Verlag, Berlin Heidelberg (1987).

\bibitem{tesestefano} Borghini, S.: On the characterization of static spacetimes with positive cosmological cons\-tant. Scuola Normale Superiore. Classe di Scienze Matem. Nat. Tesi in Matematica (2018).

\bibitem{BM1} Borghini, S. and Mazzieri, L.: On the mass of static metrics with positive cosmological constant - I.  {\it Class. Quantum Grav. } 35 (2018) 125001.

\bibitem{BM2} Borghini, S. and Mazzieri, L.: On the Mass of Static Metrics with Positive Cosmological Constant: II. \textit{Commun. Math. Phys. } 377 (2020) 2079-2158. 

\bibitem{BCM} Borghini, S., Chru\'sciel, P. and Mazzieri, L.: On the uniqueness of Schwarzschild-de Sitter spacetime. ArXiv:1909.05941 [math.DG] (2019).

\bibitem{BGH} Boucher, W., Gibbons, G. and Horowitz, G.: Uniqueness theorem for anti-de Sitter spacetime. {\it Phys. Rev. D} (3) 30 (1984) 2447-2451.

\bibitem{CEM} Corvino, J., Eichmair, M. and Miao, P.: Deformation of scalar curvature and volume. {\it Math. Annalen.} 357 (2013) 551-584.

\bibitem{DGR} Di\'ogentes, R., Gadelha, T. and Ribeiro Jr, E.: Geometry of compact quasi-Einstein manifolds with boundary. \textit{Manuscripta Math.} (2021). https://doi.org/10.1007/s00229-021-01340-4


\bibitem{FangYuan} Fang, Y. and Yuan, W.: Brown–York mass and positive scalar curvature II: Besse’s conjecture and related problems. 	\textit{Ann. Glob. Anal. Geom.} 56 (2019) 1-15. 

\bibitem{FM} Feehan, P. and Maridakis, M.: Łojasiewicz-simon gradient inequalities for analytic and morse-bott functionals on banach spaces and applications to harmonic maps. ArXiv:1510.03817 [math.DG] (2015).


\bibitem{Krantz} Krantz, G. and Parks, H.: A primer of real analytic functions. Birkhauser Advanced Texts: Basler Lehrbucher. Birkhauser Boston, Inc. Boston, MA, (2002).

\bibitem{Kim} Kim, J. and Shin, J.: Four dimensional static and related critical spaces with harmonic curvature. {\it Pacific J. Math.} 295 (2018) 429-462.

\bibitem{Loja} Łojasiewicz, S.: Una propri\'et\'e topologique des sous-ensembles analystiques r\'eels. In {\it Les \'Equations aux D\'eriv\'ees Parialles} (Paris, 1962), pages 87-89. \'Editions du Centre national de la Recherche Sientifique, Paris, 1963.

\bibitem{miaotam} Miao, P. and  Tam, L.-F.: On the volume functional of compact manifolds with boundary with constant scalar curvature. {\it Calc. Var. PDE.} 36 (2009) 141-171.

\bibitem{miaotamTAMS} Miao, P. and  Tam, L.-F.: Einstein and conformally flat critical metrics of the volume functional. {\it Trans. Amer. Math. Soc.} 363 (2011) 2907-2937.

\bibitem{Rob} Robinson, D.: A simple proof of the generalization of Israel's Theorem. {\it Gen. Relat. Grav.} 8 (1977) 695-698.

\bibitem{Shen} Shen, Y.:  A note on Fischer-Marsden’s conjecture. {\it Proc. Amer. Math. Soc.} 125 (1997) 901-905.

\bibitem{SW} Sheng, W. and Wang, L.: Critical metrics with cyclic parallel Ricci tensor for volume functional on manifolds with boundary. {\it Geom. Dedicata} 201 (2019) 243–251.

\bibitem{yuan} Yuan, W.: Volume comparison with respect to scalar curvature. ArXiv:1609.08849 [math.DG] (2016).



\end{thebibliography}
\end{document}